\begin{document}

\newtheorem{thm}{Theorem}[section]
\newtheorem{cor}{Corollary}[section]
\newtheorem{lem}{Lemma}[section]
\newtheorem{prop}{Proposition}[section]
\newtheorem{defn}{Definition}[section]
\newtheorem{rk}{Remark}[section]
\newtheorem{nota}{Notation}[section]
\newtheorem{Ex}{Example}[section]
\def\nm{\noalign{\medskip}}

\numberwithin{equation}{section}

\newcommand{\ds}{\displaystyle}
\newcommand{\pf}{\medskip \noindent {\sl Proof}. ~ }
\newcommand{\p}{\partial}
\renewcommand{\a}{\alpha}
\newcommand{\z}{\zeta}
\newcommand{\pd}[2]{\frac {\p #1}{\p #2}}
\newcommand{\norm}[1]{\left\| #1 \right \|}
\newcommand{\dbar}{\overline \p}
\newcommand{\eqnref}[1]{(\ref {#1})}
\newcommand{\na}{\nabla}
\newcommand{\Om}{\Omega}
\newcommand{\ep}{\epsilon}
\newcommand{\tmu}{\widetilde \epsilon}
\newcommand{\vep}{\varepsilon}
\newcommand{\tlambda}{\widetilde \lambda}
\newcommand{\tnu}{\widetilde \nu}
\newcommand{\vp}{\varphi}
\newcommand{\RR}{\mathbb{R}}
\newcommand{\CC}{\mathbb{C}}
\newcommand{\NN}{\mathbb{N}}
\renewcommand{\div}{\mbox{div}~}
\newcommand{\bu}{{\bf u}}
\newcommand{\la}{\langle}
\newcommand{\ra}{\rangle}
\newcommand{\Scal}{\mathcal{S}}
\newcommand{\Lcal}{\mathcal{L}}
\newcommand{\Kcal}{\mathcal{K}}
\newcommand{\Dcal}{\mathcal{D}}
\newcommand{\tScal}{\widetilde{\mathcal{S}}}
\newcommand{\tKcal}{\widetilde{\mathcal{K}}}
\newcommand{\Pcal}{\mathcal{P}}
\newcommand{\Qcal}{\mathcal{Q}}
\newcommand{\id}{\mbox{Id}}
\newcommand{\stint}{\int_{-T}^T{\int_0^1}}
%%%%%%%%%%

\newcommand{\be}{\begin{equation}}
\newcommand{\ee}{\end{equation}}

\newcommand{\rd}{{\mathbb R^d}}
\newcommand{\rr}{{\mathbb R}}
\newcommand{\alert}[1]{\fbox{#1}}
\newcommand{\eqd}{\sim}
\def\R{{\mathbb R}}
\def\N{{\mathbb N}}
\def\Q{{\mathbb Q}}
\def\C{{\mathbb C}}
\def\ZZ{{\mathbb Z}}
\def\l{{\langle}}
\def\r{\rangle}
\def\t{\tau}
\def\k{\kappa}
\def\a{\alpha}
\def\la{\lambda}
\def\De{\Delta}
\def\de{\delta}
\def\ga{\gamma}
\def\Ga{\Gamma}
\def\ep{\varepsilon}
\def\eps{\varepsilon}
\def\si{\sigma}
\def\Re {{\rm Re}\,}
\def\Im {{\rm Im}\,}
\def\E{{\mathbb E}}
\def\P{{\mathbb P}}
\def\Z{{\mathbb Z}}
\def\D{{\mathbb D}}
\def\p{\partial}
\newcommand{\ceil}[1]{\lceil{#1}\rceil}

\title{Logistic type attraction-repulsion chemotaxis systems with a free boundary or unbounded boundary.
I. Asymptotic dynamics in fixed unbounded domain}

\author{Lianzhang Bao\thanks{School of Mathematics, Jilin University, Changchun, 130012, P. R. China, and Department of Mathematics and Statistics,
Auburn University,  AL 36849, U. S. A. (lzbao@jlu.edu.cn), partially supported by the CPSF--183816.}\,\,   and
Wenxian Shen \thanks{Department of Mathematics and Statistics,
Auburn University,
 AL 36849, U. S. A. (wenxish@auburn.edu), partially supported by the NSF grant DMS--1645673.}}

\date{}

\maketitle

\begin{abstract}
The current series of research papers is to investigate the asymptotic dynamics in logistic type chemotaxis models in one space dimension with a free boundary or an unbounded boundary.
Such a model with a free boundary  describes the spreading of a new or invasive species subject to the influence of some chemical substances
in an environment with a free boundary representing the spreading front.
In this first part of the series,  we  investigate the dynamical behaviors of logistic type chemotaxis models on the half line $\mathbb{R}^+$, which are formally corresponding limit systems of the free boundary problems.
In the second of the series, we will establish the spreading-vanishing dichotomy in  chemoattraction-repulsion systems with a free boundary as well as with double free boundaries.
%In the third part of the series, we will  study the existence of  spreading speed and semi-wave solutions chemoattraction-repulsion systems %with a free boundary.
\end{abstract}

\textbf{Key words.} Chemoattraction-repulsion system, nonlinear parabolic equations, free boundary problem, spreading-vanishing dichotomy, invasive population.

\medskip

\textbf{AMS subject classifications.}
35R30, 35J65, 35K20, 92B05.

\section{Introduction}

The current series of research papers is  to  study  spreading and vanishing dynamics of
 the following attraction-repulsion chemotaxis system with a free boundary and time and space dependent logistic source,
\begin{equation}\label{one-free-boundary-eq}
\begin{cases}
u_t = u_{xx} -\chi_1  (u  v_{1,x})_x + \chi_2 (u v_{2,x})_x + u(a(t,x) - b(t,x)u), \quad 0<x<h(t)
\\
 0 = \partial_{xx} v_1 - \lambda _1v_1 + \mu_1u,  \quad  0<x<h(t)
 \\
 0 = \partial_{xx}v_2 - \lambda_2 v_2 + \mu_2u,  \quad 0<x<h(t)
 \\
 h'(t) = -\nu u_x(t,h(t))
\\
u_x(t,0) = v_{1,x}(t,0) = v_{2,x}(t,0) = 0
\\
 u(t,h(t)) = v_{1,x}(t,h(t)) = v_{2,x}(t,h(t)) = 0
 \\
 h(0) = h_0,\quad u(x,0) = u_0(x),\quad  0\leq x\leq h_0,
\end{cases}
\end{equation}
and to study the
asymptotic dynamics of
\begin{equation}
\label{half-line-eq1}
\begin{cases}
u_t = u_{xx} -\chi_1  (u  v_{1,x})_x +\chi_2(u v_{2,x})_x+ u(a(t,x) - b(t,x)u),\quad x\in (0,\infty)
\cr
 0 = v_{1,xx} - \lambda_1v_1 + \mu_1u,  \quad x\in (0,\infty)\cr
 0=v_{2,xx}-\lambda_2 v_2+\mu_2 u,  \quad x\in (0,\infty)\cr
u_x(t,0)=v_{1,x}(t,0)=v_{2,x}(t,0)=0,
\end{cases}
\end{equation}
where $\nu>0$ in \eqref{one-free-boundary-eq} is a positive constant,  and in both \eqref{one-free-boundary-eq} and \eqref{half-line-eq1},   $\chi_i$, $\lambda_i$, and $\mu_i$ ($i=1,2$)  are nonnegative constants,   and $a(t,x)$ and $b(t,x)$  satisfy the following assumption,

\medskip

\noindent {\bf (H0)} {\it $a(t,x)$ and $b(t,x)$ are bounded $C^1$ functions on $\RR\times [0,\infty)$,
and
$$a_{\inf}:=\inf_{t\in\RR,x\in [0,\infty)}a(t,x)>0,\quad b_{\inf}:=\inf_{t\in\RR,x\in[0,\infty)}b(t,x)>0.
$$
}

Chemotaxis is the influence of chemical substances in the environment on the movement of mobile species.This can lead to strictly oriented movement or to partially oriented and partially tumbling movement. The movement towards a higher concentration of the chemical substance is termed positive chemotaxis and the movement towards regions of lower chemical concentration is called negative chemotaxis. The substances that lead to positive chemotaxis are chemoattractants and those leading to negative chemotaxis are so-called repellents.

 One of the first mathematical models of chemotaxis was introduced by Keller and Segel (\cite{Keller1970Initiation}, \cite{Keller1971Model}) to describe the aggregation of certain type of bacteria. A simplified version of their model involves the distribution $u$ of the density of the slime mold \textit{Dyctyostelum discoideum} and the concentration $v$ of a certain chemoattractant satisfying the following system of partial differential equations
\begin{equation}\label{KS}
\begin{cases}
 u_t = \nabla\cdot(\nabla u - \chi u\nabla v) + G(u), \quad x\in\Omega
 \\
 \epsilon v_t =d\Delta v + F(u,v),\quad x\in\Omega
\end{cases}
\end{equation}
complemented with certain boundary condition on $\partial \Omega$ if $\Omega$ is bounded, where $\Omega \subset \mathbb{R}^N$ is an open domain, $\epsilon \geq 0$ is a non-negative constant linked to the speed of diffusion of the chemical, $\chi$ represents the sensitivity with respect to chemotaxis, and the functions $G$ and $F$ model the growth of the mobile species and the chemoattractant, respectively.

Since their publication, considerable progress has been made in the analysis of various particular cases of \eqref{KS} on both bounded and unbounded fixed domains (see \cite{Bellomo2015Toward}, \cite{Diaz1995Symmtr}, \cite{Diaz1998Symmtr}, \cite{Galakhov2016on}, \cite{Horstmann2005bound}, \cite{Kanga2016blow}, \cite{Nagai1997application}, \cite{Sugiyama2006global1}, \cite{Sugiyama2006global2}, \cite{Wang2014on}, \cite{Winkler2010aggregation}, \cite{Winkler2011blow}, \cite{Winkler2013finite}, \cite{Winkler2014global}, \cite{Winkler2014how}, \cite{Yokota2015existence}, \cite{Zheng2015boundedness}, and the references therein). Among the central problems are
the existence of nonnegative solutions of \eqref{KS} which are globally defined in time or blow up at a finite time and the asymptotic behavior of time global solutions.
When $\epsilon >0 $ \eqref{KS} is referred to as the parabolic-parabolic Keller-Segel model and $\epsilon = 0$, which models the situation where the
chemoattractant diffuses  very quickly, is the case of parabolic-elliptic Keller-Segel model.  The reader is referred to \cite{HiPa, Horstmann2003from1970} for some detailed introduction into the mathematics of KS models.

When the cells undergo random motion and chemotaxis towards attractant and away from repellent \cite{luca2003chemotactic} on a fixed domain, we have a chemoattraction-repulsion process, which combined with proliferation and death of cells  leads to the following parabolic-elliptic-elliptic differential equations,
\begin{equation}\label{KS2}
\begin{cases}
 \frac{\partial u}{\partial t} =  \Delta u - \chi_1\nabla\cdot(u\nabla v_1) +  \chi_2\nabla\cdot(u\nabla v_2) + G(u), \quad x\in\Omega
 \\
 \epsilon \frac{\partial v_1}{\partial t} = d_1\Delta v_1 + F(u,v_1),\quad x\in\Omega
 \\
  \epsilon \frac{\partial v_2}{\partial t} = d_2\Delta v_2 + H(u,v_2),\quad x\in\Omega,
\end{cases}
\end{equation}
where $\chi_1, \chi_2$ are positive constants and system \eqref{KS2} becomes to \eqref{KS} automatically when $\chi_2 = 0.$ Compared to the studies of \eqref{KS}, the global existence of classical solutions on bounded or unbounded domain, and the stability of  equilibrium solutions of \eqref{KS2} are also studied in many papers (see \cite{espejo2014global, horstmann2011generalizing, jin2015boundedness, lin2016boundedness, liu2012classical, luca2003chemotactic, Salako2017Global, wang2016global, Wang2016bondedness, Zhang2016attraction, Zheng2016boundedness} and the references therein).

System \eqref{one-free-boundary-eq} describes the movement of a mobile species with population density $u(t,x)$ in an environment
with a free boundary  subject to
a chemoattractant with population density $v_1(t,x)$, which  diffuses very quickly,
 and a repellent with population density $v_2(t,x)$, which also diffuses very quickly.
  Due to the lack of first principles for the ecological situation under consideration, a thorough justification of the free boundary condition is difficult to supply. As in \cite{Bunting2012spreading}, we present in the following a derivation of the free boundary condition
  in \eqref{one-free-boundary-eq}
  based on the consideration of ``population loss" at the front and the assumption that, near the propagating front,
   population density is  close to zero.  Then,
    in the process of population range expansion, on one hand, the individuals of the species are suffering from
    the Allee effect near the propagating front. On the other hand, as the front enters new unpopulated environment, the pioneering members at the front, with very low population density, are particularly vulnerable.
     Therefore it is plausible to assume that as the expanding front propagates, the population suffers a loss of $\kappa$ units per unit volume at the front.

By Fick's first law, for a small time increment $\Delta t$, during the period from $t$ to $t +\Delta t$, the number of individuals of the population that enter  the region (through diffusion, or random walk) bounded by the old front $x = h(t)$ and new front $x = h(t+\Delta t)$ is approximated by $-d u_x(t,h(t))  \Delta t $
 (note that $u_x(t,h(t))\le 0$ for $u(t,x)\ge 0$ on $[0,h(t))$), where $d$ is some positive constant. The population loss in this region is approximated by
 $$
 \kappa \times (\mbox{volume of the region}) = \kappa \times [h(t + \Delta t) - h(t)].
 $$
 So the average density of the population in the region bounded by the two fronts is given by
 $$
 \frac{-d u_x (t,h(t)) \Delta t }{ h(t+\Delta t)-  h(t)} - \kappa.
 $$
 As $\Delta t \to 0$, the limit of this quantity is the population density at the front, namely $u(t,h(t))$, which by assumption is 0.
 This implies that
 $$
  h^{'}(t)=-\nu u_x(t,h(t))
$$
with $\nu=d/\kappa$, and the free boundary condition in  \eqref{one-free-boundary-eq} is then derived.

Consider \eqref{one-free-boundary-eq}, it is interesting to know  whether the species will spread into the whole region $[0,\infty)$ or will vanish eventually.
Formally, \eqref{half-line-eq1} can be viewed as the limit system of \eqref{one-free-boundary-eq} as $h(t)\to \infty$.
The study of the  asymptotic dynamics of \eqref{half-line-eq1} plays an important role in the characterization of  the spreading-vanishing dynamics
 of \eqref{one-free-boundary-eq} and is also of independent interest.
 The objective of this series is to investigate the asymptotic dynamics of \eqref{half-line-eq1} and
  the spreading and vanishing scenario in \eqref{one-free-boundary-eq}.

In this first part of the series, we investigate the asymptotic dynamics of \eqref{half-line-eq1} as well as the asymptotic dynamics
of the following chemotaxis system on the whole line,
\begin{equation}
\label{whole-line-eq1}
\begin{cases}
u_t = u_{xx} -\chi_1  (u  v_{1,x})_x +\chi_2(u v_{2,x})_x+ u(a(t,x) - b(t,x)u),\quad x\in \RR
\cr
 0 = v_{1,xx} - \lambda_1v_1 + \mu_1u,  \quad x\in \RR\cr
 0=v_{2,xx}-\lambda_2 v_2+\mu_2 u,  \quad x\in \RR.
\end{cases}
\end{equation}
Formally, \eqref{whole-line-eq1} can be viewed as the limit of the following free boundary problem with double free boundaries
\begin{eqnarray}\label{two-free-boundary-eq}
\begin{cases}
u_t = u_{xx} -\chi_1  (u  v_{1,x})_x  + \chi_2 (u v_{2,x})_x + u(a(t,x) - b(t,x) u),& x\in (g(t),h(t))
\\
 0 = (\partial_{xx} - \lambda_1I)v_1 + \mu_1u, & x\in  (g(t),h(t))
 \\
 0 = (\partial_{xx} - \lambda_2I)v_2 + \mu_2u, & x\in  (g(t),h(t))
 \\
g'(t) = -\nu u_x(g(t),t), h'(t) = -\nu u_x(h(t),t)
\\
u(g(t),t) = v_{1,x}(g(t),t) = v_{2,x}(g(t),t) = 0
\\
 u(h(t),t) = v_{1,x}(h(t),t) = v_{2,x}(h(t),t) = 0
\end{cases}
\end{eqnarray}
as $g(t)\to -\infty$ and $h(t)\to \infty$.
The investigation of the asymptotic dynamics of \eqref{whole-line-eq1}  then plays a role in
 the characterization of  the spreading-vanishing dynamics of  \eqref{two-free-boundary-eq}
 and is also of independent interest.

 In the second of the series, we will establish spreading and vanishing dichotomy scenario in \eqref{one-free-boundary-eq}
 and \eqref{two-free-boundary-eq}.
 %We will study the spreading speed and semi-wave solutions of \eqref{one-free-boundary-eq} in the third
 %part of the series.

 In the following, we state the main results of this paper.

Let
$$
C_{\rm unif}^b(\RR^+)=\{u\in C(\RR^+)\,|\, u(x)\,\, \text{is uniformly continuous and bounded on}\,\, \RR^+\}
$$
with norm $\|u\|_\infty=\sup_{x\in\RR^+}|u(x)|$, and
$$
C_{\rm unif}^b(\RR)=\{u\in C(\RR)\,|\, u(x)\,\, \text{is uniformly continuous and bounded on}\,\, \RR\}
$$
with norm $\|u\|_\infty=\sup_{x\in\RR}|u(x)|$. Define
\begin{align}\label{m-eq}
M= \min\Big\{ &\frac{1}{\lambda_2}\big( (\chi_2\mu_2\lambda_2-\chi_1\mu_1\lambda_1)_+ + \chi_1\mu_1(\lambda_1-\lambda_2)_+ \big),\nonumber\\
& \qquad \frac{1}{\lambda_1}\big( (\chi_2\mu_2\lambda_2-\chi_1\mu_1\lambda_1)_+ + \chi_2\mu_2(\lambda_1-\lambda_2)_+ \big) \Big\}
 \end{align}
 and
 \begin{align}\label{k-eq}
K=\min\Big\{&\frac{1}{\lambda_2}\Big(|\chi_1\mu_1\lambda_1-\chi_2\mu_2\lambda_2|+\chi_1\mu_1|\lambda_1-\lambda_2|\Big),\nonumber\\
&\quad  \frac{1}{\lambda_1}\Big(|\chi_1\mu_1\lambda_1-\chi_2\mu_2\lambda_2|+\chi_2\mu_2|\lambda_1-\lambda_2|\Big) \Big\}.
\end{align}
 Let {\bf (H1)}- {\bf (H3)}  be the following standing assumptions.

\medskip

\noindent {\bf (H1)}  $b_{\inf}>\chi_1\mu_1-\chi_2\mu_2+M$.

\medskip

\noindent {\bf (H2)} $b_{\inf}>\Big(1+\frac{a_{\sup}}{a_{\inf}}\Big)\chi_1\mu_1-\chi_2\mu_2+M$.

\medskip

\noindent {\bf (H3)}  $b_{\inf}>\chi_1\mu_1-\chi_2\mu_2+K$.

\medskip

The main results of this first part are stated in the following theorems.

\begin{thm}[Global existence]
\label{half-line-thm1}
Consider \eqref{half-line-eq1}.
If {\bf (H1)} holds, then for any $t_0\in\RR$ and any nonnegative function $u_0\in C^{b}_{\rm unif}(\RR^+)$, \eqref{half-line-eq1} has a unique solution
    $(u(t,x;t_0,u_0),v_1(t,x;t_0,u_0)$, $v_2(t,x;t_0,u_0))$ with $u(t_0,x;t_0,u_0)=u_0(x)$ defined for $t\ge t_0$. Moreover,
    $$
0\le u(t,x;t_0,u_0)\le  C(u_0)\quad \forall \,\, t\in [t_0,\infty), \,\, x\in [0,\infty),
$$
{ and
$$
\limsup_{t\to\infty}\|u(t,\cdot;t_0,u_0)\|_\infty\le \frac{a_{\sup}}{b_{\inf}+\chi_2\mu_2-\chi_1\mu_1-M},
$$}
where
\begin{equation}
\label{c0-eq}
C(u_0)=\max\{\|u_0\|_{\infty}, \frac{a_{\sup}}{b_{\inf}+\chi_2\mu_2-\chi_1\mu_1-M}\}.
\end{equation}
\end{thm}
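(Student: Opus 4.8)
The plan is to reduce \eqref{half-line-eq1} to a single nonlocal scalar parabolic equation for $u$, prove local well-posedness in $C^{b}_{\rm unif}(\RR^+)$, establish the sharp $L^\infty$ a priori estimate that produces the constant $M$, and close the argument by comparison with a logistic ODE together with the blow-up alternative. For nonnegative $u\in C^{b}_{\rm unif}(\RR^+)$ the elliptic problems $v_{i,xx}-\lambda_i v_i+\mu_i u=0$, $v_{i,x}(t,0)=0$, have unique bounded solutions $v_i=v_i[u]=\mu_i(\lambda_i-\partial_{xx})^{-1}u$, and $(\lambda_i-\partial_{xx})^{-1}$ and $\partial_x(\lambda_i-\partial_{xx})^{-1}$ are bounded on $C^{b}_{\rm unif}(\RR^+)$ (convolution against integrable kernels). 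Substituting $v_{i,xx}=\lambda_i v_i-\mu_i u$, the $u$-equation becomes
\[
u_t=u_{xx}+\bigl(\chi_2 v_{2,x}-\chi_1 v_{1,x}\bigr)u_x+u\Bigl(a-(b-\chi_1\mu_1+\chi_2\mu_2)u+\chi_2\lambda_2 v_2-\chi_1\lambda_1 v_1\Bigr),\quad u_x(t,0)=0.
\]
Since $u\mapsto v_i[u]$ is linear and bounded into $C^1_{\rm unif}$, the nonlinearity is locally Lipschitz and the transport term is handled through the smoothing bound $\|e^{t\partial_{xx}}f\|_{C^1_{\rm unif}}\le Ct^{-1/2}\|f\|_{C^{b}_{\rm unif}}$ for the Neumann heat semigroup; a contraction argument yields a unique mild (hence, by parabolic regularity, classical) solution on a maximal interval $[t_0,T_{\max})$, and since $u\equiv 0$ solves the equation and the zeroth order coefficient is bounded on $[t_0,T)\times\RR^+$ for $T<T_{\max}$, the maximum principle gives $u\ge 0$.

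\emph{The key elliptic estimate.} The heart of the matter is the pointwise bound $\chi_2\lambda_2 v_2[u](x)-\chi_1\lambda_1 v_1[u](x)\le M\|u\|_\infty$ for $x\ge 0$, with $M$ as in \eqref{m-eq}. Comparison for $-\partial_{xx}+\lambda_i$ with homogeneous Neumann data first gives $0\le\lambda_i v_i\le\mu_i\|u\|_\infty$ and $0\le v_i\le(\mu_i/\lambda_i)\|u\|_\infty$. Then the auxiliary functions $\phi:=\chi_2 v_2-\tfrac{\chi_2\mu_2}{\mu_1}v_1$ and $\psi:=\chi_1 v_1-\tfrac{\chi_1\mu_1}{\mu_2}v_2$ satisfy, with $u$ eliminated,
\[
-\phi_{xx}+\lambda_1\phi=\chi_2(\lambda_1-\lambda_2)v_2,\qquad -\psi_{xx}+\lambda_2\psi=\chi_1(\lambda_2-\lambda_1)v_1,
\]
and homogeneous Neumann data. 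Comparing $\phi$ and $\psi$ with the constant super/subsolutions dictated by the sign of $\lambda_1-\lambda_2$ together with the bounds on $v_2$, $v_1$, and then reinserting $v_i\le(\mu_i/\lambda_i)\|u\|_\infty$ (or $v_i\ge 0$) in the term carrying the remaining favourable-sign coefficient, gives $\chi_2\lambda_2 v_2-\chi_1\lambda_1 v_1\le M_2\|u\|_\infty$ from $\phi$ and $\le M_1\|u\|_\infty$ from $\psi$, where $M_1,M_2$ are the first and second entries of the minimum in \eqref{m-eq}; taking the better of the two proves the claim.

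\emph{A priori bound and conclusion.} Under {\bf (H1)} one has $\kappa:=b_{\inf}+\chi_2\mu_2-\chi_1\mu_1-M>0$, and also $b-\chi_1\mu_1+\chi_2\mu_2\ge b_{\inf}-\chi_1\mu_1+\chi_2\mu_2>0$ since $M\ge 0$. Let $\bar u_\ep$ solve $\bar u'=\bar u(a_{\sup}-\kappa\bar u)$, $\bar u_\ep(t_0)=\|u_0\|_\infty+\ep$. At a point where $\sup_{x}u(t,\cdot)$ is (approximately) attained one has $u_x=0$, $u_{xx}\le 0$, and by the previous step $\chi_2\lambda_2 v_2-\chi_1\lambda_1 v_1\le M\sup_x u$, so the right-hand side of the $u$-equation there is at most $(\sup_x u)(a_{\sup}-\kappa\sup_x u)$; as $\bar u_\ep$ is a strict supersolution of this scalar problem, the comparison principle (adapted to the unbounded half-line by a standard localization using the uniform continuity of $u,u_x,u_{xx}$ coming from parabolic estimates) yields $u(t,x)\le\bar u_\ep(t)$ on $[t_0,T_{\max})$. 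Letting $\ep\downarrow 0$ gives $0\le u(t,x;t_0,u_0)\le\bar u_0(t)$ with $\bar u_0$ the solution of the logistic ODE with datum $\|u_0\|_\infty$; hence $u\le C(u_0)$ as in \eqref{c0-eq} and $\limsup_{t\to\infty}\|u(t,\cdot;t_0,u_0)\|_\infty\le a_{\sup}/\kappa$. Boundedness on finite time intervals then forces $T_{\max}=\infty$ by the blow-up alternative.

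\emph{Main obstacle.} The decisive step is the sharp elliptic estimate: one must extract the precise constant $M$ of \eqref{m-eq} from the competition between the attraction profile $\chi_1\lambda_1 v_1$ and the repulsion profile $\chi_2\lambda_2 v_2$, rather than settling for the crude bound $\chi_2\mu_2\|u\|_\infty$, which discards the helpful $-\chi_2\mu_2 u^2$ term and yields the strictly worse threshold $b_{\inf}>\chi_1\mu_1$; finding the auxiliary combinations $\phi,\psi$ that eliminate $u$ and then choosing, at each point, the better of the resulting two bounds is what makes this work. A secondary technicality is justifying the maximum-principle comparisons rigorously on the unbounded domain $\RR^+$.
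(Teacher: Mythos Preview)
Your argument is correct and reaches the same conclusion as the paper, but two of the main steps proceed along genuinely different lines.

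\textbf{The elliptic estimate.} The paper obtains $\chi_2\lambda_2 v_2-\chi_1\lambda_1 v_1\le M\|u\|_\infty$ by writing $v_i=\mu_i\int_0^\infty e^{-\lambda_i s}\bar T(s)u\,ds$ via the Neumann heat semigroup and splitting the resulting integral as $(\chi_2\lambda_2\mu_2-\chi_1\lambda_1\mu_1)\int e^{-\lambda_2 s}\bar T(s)u\,ds+\chi_1\lambda_1\mu_1\int(e^{-\lambda_2 s}-e^{-\lambda_1 s})\bar T(s)u\,ds$, then bounding each piece by its positive part against $\bar T(s)u\le\|u\|_\infty$. Your route via the auxiliary functions $\phi,\psi$, which satisfy elliptic equations with $u$ eliminated, is a purely PDE-comparison alternative; it is somewhat more elementary (no kernel representations) and, once one writes $\chi_2\lambda_2 v_2-\chi_1\lambda_1 v_1=\lambda_2\phi+\frac{\chi_2\mu_2\lambda_2-\chi_1\mu_1\lambda_1}{\mu_1}v_1=-\lambda_1\psi+\frac{\chi_2\mu_2\lambda_2-\chi_1\mu_1\lambda_1}{\mu_2}v_2$, does indeed recover exactly the two entries of the minimum defining $M$.

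\textbf{The a priori bound.} Here the paper takes a rather indirect path: for each $T<T_{\max}$ it sets up the closed convex set $\mathcal{E}=\{u:0\le u\le C_0\}$ in a compact-open topology, freezes $u\in\mathcal{E}$ in the nonlocal terms, solves the resulting linear-in-$U$ parabolic problem, checks (via the elliptic estimate and the constant supersolution $C_0$) that $U\in\mathcal{E}$, and invokes Schauder's fixed point theorem; uniqueness then identifies the fixed point with the actual solution. Your direct comparison with the logistic ODE $\bar u_\ep'=\bar u_\ep(a_{\sup}-\kappa\bar u_\ep)$ is the more natural strategy and avoids Schauder entirely, at the cost of the localization issue on $\RR^+$ that you flag. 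One clean way to close your argument rigorously and bypass the circularity is: for fixed $T<T_{\max}$ set $C_T:=\sup_{[t_0,T]\times\RR^+}u<\infty$; the elliptic estimate gives $\chi_2\lambda_2 v_2-\chi_1\lambda_1 v_1\le MC_T$ on $[t_0,T]$, so comparison with the ODE $\bar u'=\bar u(a_{\sup}+MC_T-(b_{\inf}+\chi_2\mu_2-\chi_1\mu_1)\bar u)$ yields $C_T\le\max\{\|u_0\|_\infty,\frac{a_{\sup}+MC_T}{b_{\inf}+\chi_2\mu_2-\chi_1\mu_1}\}$, and solving this algebraic inequality for $C_T$ gives $C_T\le C(u_0)$ directly. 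The paper's Schauder device buys a clean separation of the nonlocal coupling from the comparison step; your approach is shorter once the bootstrap is made explicit.
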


\begin{thm}[Persistence]
\label{half-line-thm2}

Consider \eqref{half-line-eq1}.

 \begin{itemize}

 \item[(1)]
 If {\bf (H1)} holds, then for any $u_0\in C_{\rm unif}^b(\RR^+)$ with $\inf_{x\in\RR^+}u_0(x)>0$, there is $m(u_0)>0$ such that
 $$
 m(u_0)\le u(t,x;t_0,u_0)\le C(u_0)\quad \forall\,\, t\ge t_0,\,\, x\in\RR^+.
 $$

 \item[(2)] If {\bf (H2)} holds, then there are $0<m_0<M_0$ such that for any $u_0\in C_{\rm unif}^b(\RR^+)$
 { with $\inf_{x\in\RR^+}u_0(x)>0$}, there is $T(u_0)>0$ such that
 $$
 m_0\le u(t,x;t_0,u_0)\le M_0\quad \forall \,\,{  t_0\in\RR},\,\, t\ge t_0+T(u_0),\,\,  x\in\RR^+.
 $$
\end{itemize}

\end{thm}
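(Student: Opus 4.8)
The plan is to reduce the problem to a scalar parabolic inequality for $u$ with bounded, ``frozen'' coefficients, and then to run comparison arguments against spatially homogeneous sub/supersolutions.

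First I would substitute the two elliptic equations into the $u$-equation. Since $v_{i,xx}=\lambda_i v_i-\mu_i u$, expanding $(uv_{i,x})_x=u_xv_{i,x}+uv_{i,xx}$ rewrites \eqref{half-line-eq1} as
\begin{equation}\label{u-transformed}
u_t=u_{xx}+\big(\chi_2v_{2,x}-\chi_1v_{1,x}\big)u_x+u\Big[a(t,x)-\chi_1\lambda_1v_1+\chi_2\lambda_2v_2-\big(b(t,x)-\chi_1\mu_1+\chi_2\mu_2\big)u\Big].
\end{equation}
This I would combine with the elliptic a priori bounds (established earlier, together with the definition of $M$): $0\le v_i(t,x)\le\tfrac{\mu_i}{\lambda_i}\|u(t,\cdot)\|_\infty$, boundedness of $\chi_2v_{2,x}-\chi_1v_{1,x}$ by a multiple of $\|u(t,\cdot)\|_\infty$, and $\chi_2\lambda_2v_2-\chi_1\lambda_1v_1\le M\|u(t,\cdot)\|_\infty$. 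By Theorem~\ref{half-line-thm1}, under {\bf (H1)} the solution is global, $0\le u(t,x;t_0,u_0)\le C(u_0)$ for all $t\ge t_0$, and $\limsup_{t\to\infty}\|u(t,\cdot;t_0,u_0)\|_\infty\le M_0:=\tfrac{a_{\sup}}{b_{\inf}+\chi_2\mu_2-\chi_1\mu_1-M}$. Finally, in \eqref{u-transformed} the reaction coefficient is bounded below by a constant $-L(u_0)$ depending only on $C(u_0)$ and the structural constants, so comparison with $(\inf_{x\in\RR^+}u_0(x))\,e^{-L(u_0)(t-t_0)}$ (same Neumann datum at $x=0$, both bounded) yields a crude strictly positive lower bound on every bounded time interval; hence the only issue is the behavior as $t\to\infty$.

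For part (2): fix small $\epsilon>0$; the $\limsup$ bound gives $T_1=T_1(u_0)$ with $\|u(t,\cdot)\|_\infty\le M_0+\epsilon$ for $t\ge t_0+T_1$, so there $\chi_1\lambda_1v_1(t,x)\le\chi_1\mu_1(M_0+\epsilon)$ and the reaction coefficient in \eqref{u-transformed} is $\ge a_{\inf}-\chi_1\mu_1(M_0+\epsilon)-\widetilde b\,u$, with $\widetilde b:=b_{\sup}-\chi_1\mu_1+\chi_2\mu_2>0$. The key algebraic observation is that {\bf (H2)} is exactly $\chi_1\mu_1M_0<a_{\inf}$ (rewrite {\bf (H2)} as $b_{\inf}+\chi_2\mu_2-\chi_1\mu_1-M>\tfrac{a_{\sup}}{a_{\inf}}\chi_1\mu_1$ and use $b_{\inf}+\chi_2\mu_2-\chi_1\mu_1-M=a_{\sup}/M_0$); hence for $\epsilon$ small $c_0:=a_{\inf}-\chi_1\mu_1(M_0+\epsilon)>0$. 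Thus, for $t\ge t_0+T_1$, $u$ is a supersolution of $w_t=w_{xx}+(\chi_2v_{2,x}-\chi_1v_{1,x})w_x+w(c_0-\widetilde b\,w)$, and comparing with the spatially constant subsolution $\underline w(t)$ solving the logistic ODE $\underline w'=\underline w(c_0-\widetilde b\,\underline w)$ with $\underline w(t_0+T_1)=\inf_{x}u(t_0+T_1,x;t_0,u_0)>0$ (positive by the crude bound together with $\inf_{x\in\RR^+}u_0(x)>0$) gives $u(t,x;t_0,u_0)\ge\underline w(t)\to c_0/\widetilde b$ as $t\to\infty$. Since $c_0/\widetilde b$ depends only on structural constants, taking $m_0:=c_0/(2\widetilde b)$ and $M_0$ (enlarged by $\epsilon$ if one insists on a fixed constant) for the upper bound, there is $T(u_0)$ with $m_0\le u(t,x;t_0,u_0)\le M_0$ for $t\ge t_0+T(u_0)$, and the whole construction of $m_0$, $M_0$, $T_1$ is uniform in $t_0\in\RR$.

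Part (1) runs the same scheme with the weaker confinement: using only $\|u(t,\cdot)\|_\infty\le C(u_0)$ (and $\le M_0+\epsilon$ after $T_1(u_0)$), $u$ is a supersolution of $w_t=w_{xx}+(\chi_2v_{2,x}-\chi_1v_{1,x})w_x+w\big(a_{\inf}-\chi_1\mu_1\|u(t,\cdot)\|_\infty-\widetilde b\,w\big)$, and comparison with the associated nonautonomous logistic ODE, fed with $\limsup\|u(t,\cdot)\|_\infty\le M_0$ and the finite-time positivity, bounds $u$ from below by a positive constant $m(u_0)$ depending on $u_0$, the upper bound $C(u_0)$ being Theorem~\ref{half-line-thm1}. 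I expect the one genuinely delicate point to be the control of the attraction term $-\chi_1\lambda_1v_1(t,x)$: it is nonlocal in $u$ and is large precisely where $u$ has recently been large, so pointwise estimates are lossy, and it is exactly to absorb this loss that the sharp $\limsup$-bound $M_0$ of Theorem~\ref{half-line-thm1} and the reformulation {\bf (H2)}$\Leftrightarrow\chi_1\mu_1M_0<a_{\inf}$ are needed for the uniform conclusion in part (2), whereas {\bf (H1)} only supports the $u_0$-dependent statement of part (1). The remaining ingredients---parabolic comparison on $\RR^+$ with bounded coefficients, boundedness of those coefficients through the elliptic estimates, and strict positivity for $t>t_0$ via the strong maximum principle---are routine.
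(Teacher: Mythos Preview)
Your argument for part~(2) is correct and is essentially the paper's own proof: the paper rewrites the $u$-equation exactly as in your \eqref{u-transformed}, uses the elliptic bound $\chi_1\lambda_1 v_1\le \chi_1\mu_1(\bar u+\varepsilon)$ for large times, drops the nonnegative $\chi_2\lambda_2 v_2$ term, and compares with a spatially constant logistic solution to obtain
\[
\underline u\ \ge\ \frac{a_{\inf}-\chi_1\mu_1\bar u}{b_{\sup}-\chi_1\mu_1+\chi_2\mu_2},
\]
which together with $\bar u\le M_0$ and the identity {\bf (H2)} $\Leftrightarrow$ $\chi_1\mu_1 M_0<a_{\inf}$ gives a universal $m_0>0$. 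One minor difference: the paper first invokes part~(1) to ensure $\underline u>0$, whereas you feed the ODE with the crude finite-time exponential lower bound at $t_0+T_1$; your variant is self-contained and works since the logistic ODE with $c_0>0$ attracts every positive initial datum.

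Part~(1), however, has a genuine gap. Under {\bf (H1)} alone the quantity $a_{\inf}-\chi_1\mu_1 M_0$ can be \emph{negative}: take for instance $\chi_2=0$, $\lambda_1=\lambda_2$ (so $M=0$), $a\equiv1$, $b\equiv 3/2$, $\chi_1\mu_1=1$; then {\bf (H1)} holds ($3/2>1$), $M_0=2$, and $a_{\inf}-\chi_1\mu_1 M_0=-1$. In that regime your nonautonomous subsolution $\underline w$ satisfies, for large $t$, $\underline w'\le \underline w(c_0-\widetilde b\,\underline w)$ with $c_0<0$, hence $\underline w(t)\to 0$, and no positive lower bound follows---not even a $u_0$-dependent one. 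Allowing $m(u_0)$ to depend on $u_0$ does not help, because the obstruction is the long-time sign of the linear coefficient, not its size. The difficulty is exactly the one you flagged as ``delicate'': $-\chi_1\lambda_1 v_1(t,x)$ is controlled only by $\sup_y u(t,y)$, which need not be small where $u(t,x)$ is small.

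The paper handles part~(1) by an entirely different mechanism (its Lemma~\ref{half-line-lm4}): a localized argument showing that there exist $T_0>0$ and $\delta_0^*>0$ such that $\delta\le u_0\le M^+$ implies $\delta\le u(t_0+T_0,\cdot;t_0,u_0)\le M^+$ for every $0<\delta\le\delta_0^*$. The proof is by contradiction: if $u$ became small near some $x_n$, then either $u_{0n}$ is small on a large neighborhood of $x_n$, in which case the elliptic Green's function forces $v_i$, $v_{i,x}$ to be small there and a principal-eigenvalue lower bound on a finite interval (Dirichlet or mixed Neumann--Dirichlet, depending on proximity to $x=0$) pushes $u$ back up; or $u_{0n}$ has nontrivial mass nearby, and a compactness/limit argument produces a limit solution that is strictly positive at the contradicting point. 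Iterating this one-step invariance over $nT_0$ yields $m(u_0)>0$. Your ODE comparison cannot replace this local mechanism under {\bf (H1)}.
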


\begin{thm}[Positive entire solution]
\label{half-line-thm3}

Consider \eqref{half-line-eq1}.

\begin{itemize}

\item[(1)] (Existence of strictly positive entire solution) If {\bf (H1)} holds, then \eqref{half-line-eq1}
admits a strictly positive entire solution $(u^+(t,x),v_1^+(t,x),v_2^+(t,x))$. Moreover, if
 $a(t+T,x)\equiv a(t,x)$ and $b(t+T,x)\equiv b(t,x)$, then \eqref{half-line-eq1}
admits a strictly positive $T-$ periodic solution $ (u^+(t,x)$, $v_1^+(t,x)$, $v_2^+(t,x))= (u^+(t+T,x),v_1^+(t+T,x),v_2^+(t+T,x)) $.

\item[(2)] (Stability and uniqueness of strictly positive entire solution)

\begin{itemize}

\item[(i)]  Assume {\bf (H3)} and $a(t,x)\equiv a(t)$ and $b(t,x)\equiv b(t)$, then  for any $u_0\in C_{\rm unif}^b(\RR^+)$ with $\inf_{x\in \RR}u_0(x)>0$,
   \begin{equation}\label{Eq-asymptotic-0}
 \lim_{t\to\infty} \| u(t+t_0,\cdot;t_0,u_0) - u^+(t+t_0,\cdot)\|_\infty =0, \forall t_0\in \mathbb{R}.
\end{equation}

\item[(ii)] Assume {\bf (H3)}.
There are $\chi_1^*>0$ and $\chi_2^*>0$ such that, if $0\le \chi_1\le \chi_1^*$ and $0\le \chi_2\le \chi_2^*$, then for any $u_0\in C_{\rm unif}^b(\RR^+)$ with $\inf_{x\in \RR^+}u_0(x)>0$,
\eqref{Eq-asymptotic-0} holds.
\end{itemize}
\end{itemize}
\end{thm}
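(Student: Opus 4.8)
We treat the three assertions in turn, throughout using Theorems~\ref{half-line-thm1} and \ref{half-line-thm2} and the non-divergence form of the $u$-equation obtained from $v_{i,xx}=\lambda_i v_i-\mu_i u$, namely
\[
u_t=u_{xx}-(\chi_1 v_{1,x}-\chi_2 v_{2,x})u_x+u\big(a-\chi_1\lambda_1 v_1+\chi_2\lambda_2 v_2-(b-\chi_1\mu_1+\chi_2\mu_2)u\big).
\]

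\emph{Existence of an entire solution.} The plan is a pull-back (limiting-profile) argument. Fix a constant $\psi\equiv\delta>0$. By Theorem~\ref{half-line-thm1}, $u_n(t,x):=u(t,x;-n,\psi)$ exists for $t\ge-n$ with $0\le u_n\le C(\psi)$, and by Theorem~\ref{half-line-thm2}(1), $u_n\ge m(\psi)>0$ on $[-n,\infty)$ with $m(\psi)$ independent of the initial time (the lower bound comes from comparison with an autonomous logistic equation built out of $a_{\inf},b_{\sup}$). Using the elliptic bounds $\|v_i(t,\cdot)\|_{C^1}\le C\|u(t,\cdot)\|_\infty$ (explicit half-line Green's function) together with interior parabolic $L^p$ and Schauder estimates, one gets uniform $C^{1+\alpha/2,2+\alpha}_{\mathrm{loc}}(\RR\times\RR^+)$ bounds for $\{u_n\}$; a diagonal extraction then yields $u_{n_k}\to u^+$ in $C^{1,2}_{\mathrm{loc}}$, and solving the two elliptic equations produces an entire solution with $0<m(\psi)\le u^+\le C(\psi)$, as required. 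When $a,b$ are $T$-periodic in $t$, a $T$-periodic solution is produced by a Schauder--Tychonoff fixed-point argument for the period map $\mathcal Q\colon u_0\mapsto u(T,\cdot\,;0,u_0)$ on a bounded, convex, $\mathcal Q$-invariant order interval of $C^b_{\mathrm{unif}}(\RR^+)$, with compactness supplied by parabolic smoothing in the local-uniform topology (equivalently, by carrying out the pull-back along $t_n=-nT$).

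\emph{Stability and uniqueness, case $a=a(t),\ b=b(t)$.} Here the $x$-independent function $p^*(t)$ --- the unique entire solution of the logistic ODE $\dot p=p(a(t)-b(t)p)$ that is bounded and bounded away from $0$ --- together with $v_i^+=\tfrac{\mu_i}{\lambda_i}p^*$ solves \eqref{half-line-eq1} (the chemotactic terms vanish since $v^+_{i,x}\equiv0$), and by uniqueness of solutions it coincides with the $u^+$ of part~(1). Given $u_0$ with $\inf u_0>0$, put $w=u-p^*$, $\zeta_i=v_i-v_i^+$; then $\zeta_{i,xx}=\lambda_i\zeta_i-\mu_i w$, so $\|\zeta_i(t,\cdot)\|_\infty,\|\zeta_{i,x}(t,\cdot)\|_\infty\le C_i\|w(t,\cdot)\|_\infty$, and subtracting the $u$-equations gives a scalar parabolic equation
\[
w_t=w_{xx}-(\chi_1 v_{1,x}-\chi_2 v_{2,x})w_x+c(t,x)\,w ,
\]
whose reaction coefficient $c$, after inserting $\zeta_{i,xx}=\lambda_i\zeta_i-\mu_i w$ and the a priori bounds for $v_1,v_2$ (which is where the constant $K$ of \eqref{k-eq} enters), is a logistic damping term competing with a chemotactic remainder; since $u(t,\cdot)$ is eventually trapped between positive constants (Theorems~\ref{half-line-thm1}--\ref{half-line-thm2}), assumption \textbf{(H3)} makes the damping win, i.e.\ $\sup_x c(t,x)\le-\delta<0$ for all large $t$. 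Evaluating the $w$-equation at points $x_\varepsilon$ nearly realizing $\|w(t,\cdot)\|_\infty$ (the supremum need not be attained on $\RR^+$) and letting $\varepsilon\to0$ gives $D^+\|w(t,\cdot)\|_\infty\le-\delta\|w(t,\cdot)\|_\infty$ for large $t$, hence the exponential convergence \eqref{Eq-asymptotic-0}; applied to the difference of two entire solutions bounded below by a positive constant, the same inequality forces uniqueness.

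\emph{Stability and uniqueness for small $\chi_i$; main obstacle.} With $x$-dependent $a,b$ one loses the explicit $u^+$, so we take $u^+$ from part~(1) and run the same scheme with $w=u-u^+$, $\zeta_i=v_i-v_i^+$. The difference now also carries the terms $\chi_1 u^+_x\zeta_{1,x}$, $\chi_1\lambda_1 u^+\zeta_1$ and their $\chi_2$ analogues, all of size $O\big((\chi_1+\chi_2)\|w(t,\cdot)\|_\infty\big)$ since $\|u^+\|_{C^1}$ is bounded (parabolic regularity); as the $\chi_1=\chi_2=0$ equation is the logistic one with $D^+\|w\|_\infty\le-\delta_0\|w\|_\infty$ ($\delta_0>0$ fixed by $b_{\inf}$ and the persistence level), choosing $\chi_1^*,\chi_2^*$ small enough that these terms are $\le\tfrac{\delta_0}{2}\|w\|_\infty$ preserves $D^+\|w\|_\infty\le-\tfrac{\delta_0}{2}\|w\|_\infty$ and gives \eqref{Eq-asymptotic-0}. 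The routine ingredients are the pull-back compactness and the scalar logistic ODE facts; the genuine difficulty lies in the previous case --- the sharp bookkeeping showing that the chemotactic remainder in $c(t,x)$ is dominated by the logistic damping with \emph{exactly} the constant $K$ of \eqref{k-eq} and no spurious factor, so that precisely \textbf{(H3)} (rather than a strictly stronger hypothesis) suffices. This forces one to pair each destabilizing contribution ($\chi_1\mu_1(u+u^+)$, $\chi_2\lambda_2 v_2$, the $\zeta_i$-terms) carefully with the stabilizing ones ($-\chi_1\lambda_1 v_1$, $-\chi_2\mu_2 u$, and the reaction itself) and to use the refined pointwise estimates for $v_1,v_2$; a secondary technical point is the legitimacy of differentiating $\|w(t,\cdot)\|_\infty$ on the unbounded half-line, handled by the near-maximum-point device above.
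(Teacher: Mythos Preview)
Your Part~(1) is the same pull-back/Schauder scheme the paper uses (the paper phrases the lower barrier via Lemma~\ref{half-line-lm4} rather than via Theorem~\ref{half-line-thm2}(1), but the mechanism is identical).

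For Part~(2)(i) your route is genuinely different from the paper's, and as written it has a gap. The paper works with the \emph{ratio} $U=u/u^*$, not the difference, and the point of this choice is that in the $U$-equation \eqref{asym-eq3} the term $a(t)-b(t)u^*(t)$ disappears: one is left with the clean logistic piece $(b-\chi_1\mu_1+\chi_2\mu_2)U(1-U)u^*$ plus the nonlocal term, and Lemma~\ref{half-line-lm2}(2) bounds the latter by $K\|U-1\|_\infty$. The paper then runs an \emph{iterative} ODE comparison that shrinks $\|U-1\|_\infty$ by the factor $K/(b_{\inf}+\chi_2\mu_2-\chi_1\mu_1)<1$ at each step, which is exactly where {\bf (H3)} enters. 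Your difference $w=u-p^*$ produces the local coefficient $a(t)-b(t)p^*(t)-(b-\chi_1\mu_1+\chi_2\mu_2)u$ in the $w$-equation, and your claim ``$\sup_x c(t,x)\le -\delta<0$ for all large $t$'' is not true in general when $a,b$ are merely time-dependent: $a(t)-b(t)p^*(t)=\dot p^*/p^*$ can be positive at some times, and there is no reason the persistence level $m(u_0)$ is large enough to absorb it pointwise. Two fixes are available: (a) switch to the ratio $U=u/p^*$ (then your near-max-point argument \emph{does} give $D^+\|U-1\|_\infty\le -(\tilde b-K)\,p^*_{\inf}\,U_{\min}\,\|U-1\|_\infty$, which is the differential-inequality analogue of the paper's iteration), or (b) keep $w$ but replace the pointwise claim by the integrated one, using that $\int_{t_0}^t (a-bp^*)\,ds=\ln p^*(t)-\ln p^*(t_0)$ stays bounded while $\int_{t_0}^t (\tilde b-K)u(\cdot,x_{\max})\,ds\to\infty$.

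For Part~(2)(ii) the paper does not give a proof, so there is nothing to compare; but your argument inherits the same issue. The assertion ``the $\chi_1=\chi_2=0$ equation has $D^+\|w\|_\infty\le -\delta_0\|w\|_\infty$'' for the Fisher--KPP equation with space--time dependent $a(t,x),b(t,x)$ is not automatic from persistence alone: the coefficient in the $w$-equation is $a-b(u+u^+)$, and you need $u+u^+>a/b$ pointwise, which persistence does not guarantee. Again the standard remedy is to work with the ratio (part metric), as in \cite{Salako2018parabolic2}, rather than the raw difference.
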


 Similar results to Theorems \ref{half-line-thm1}-\ref{half-line-thm3}
  hold for \eqref{whole-line-eq1}. More precisely, we have

\begin{thm}
\label{whole-line-thm}
Consider \eqref{whole-line-eq1}. The following hold.
\begin{itemize}
\item[(1)] (Global existence) If {\bf (H1)} holds, then for any $t_0\in\RR$ and any nonnegative function $u_0\in C^{b}_{\rm unif}(\RR)$, \eqref{whole-line-eq1} has a unique solution
    $(u(t,x;t_0,u_0),v_1(t,x;t_0,u_0)$, $v_2(t,x;t_0,u_0))$ with $u(t_0,x;t_0,u_0)=u_0(x)$ defined for $t\ge t_0$. Moreover,
    $$
0\le u(t,x;t_0,u_0)\le  C(u_0)\quad \forall \,\, t\in [t_0,\infty), \,\, x\in \RR,
$$
{ and
$$
\limsup_{t\to\infty}\|u(t,\cdot;t_0,u_0)\|_\infty\le \frac{a_{\sup}}{b_{\inf}+\chi_2\mu_2-\chi_1\mu_1-M},
$$}
where
\begin{equation}
\label{c0-eq}
C(u_0)=\max\{\|u_0\|_{\infty}, \frac{a_{\sup}}{b_{\inf}+\chi_2\mu_2-\chi_1\mu_1-M}\}.
\end{equation}

\item[(2)] (Persistence)

 \begin{itemize}

 \item[(i)]
 If {\bf (H1)} holds, then for any $u_0\in C_{\rm unif}^b({ \RR})$ with $\inf_{{ x\in\RR}}u_0(x)>0$, there is $m(u_0)>0$ such that
 $$
 m(u_0)\le u(t,x;t_0,u_0)\le C(u_0)\quad \forall\,\, t\ge t_0,\,\, x\in{ \RR}.
 $$

 \item[(ii)] If {\bf (H2)} holds, then there are $0<m_0<M_0$ such that for any $u_0\in C_{\rm unif}^b({ \RR})$ with
 $\inf_{x\in\RR}u_0(x)>0$, there is $T(u_0)>0$ such that
 $$
 m_0\le u(t,x;t_0,u_0)\le M_0\quad \forall t\ge t_0+T(u_0),\quad x\in { \RR}.
 $$
\end{itemize}

\item[(3)] (Existence of strictly positive entire solution) If {\bf (H1)} holds, then \eqref{whole-line-eq1}
admits a strictly positive entire solution $(u^+(t,x),v_1^+(t,x),v_2^+(t,x))$. Moreover, if
 $a(t+T,x)\equiv a(t,x)$ and $b(t+T,x)\equiv b(t,x)$, then \eqref{whole-line-eq1}
admits a strictly positive $T-$ periodic solution $ (u^+(t,x)$, $v_1^+(t,x)$, $v_2^+(t,x))= (u^+(t+T,x),v_1^+(t+T,x),v_2^+(t+T,x)) $.

\item[(4)] (Stability and uniqueness of strictly positive entire solution)

\begin{itemize}

\item[(i)]  Assume {\bf (H3)} and $a(t,x)\equiv a(t)$ and $b(t,x)\equiv b(t)$, then  for any $u_0\in C_{\rm unif}^b(\RR)$ with $\inf_{x\in \RR}u_0(x)>0$,
   \begin{equation}\label{Eq-asymptotic-1}
 \lim_{t\to\infty} \| u(t+t_0,\cdot;t_0,u_0) - u^+(t+t_0,\cdot)\|_\infty =0, \forall t_0\in \mathbb{R}.
\end{equation}

\item[(ii)] Assume {\bf (H3)}.
There are $\chi_1^*>0$ and $\chi_2^*>0$ such that, if $0\le \chi_1\le \chi_1^*$ and $0\le \chi_2\le \chi_2^*$, then for any $u_0\in C_{\rm unif}^b(\RR)$ with $\inf_{x\in \RR}u_0(x)>0$,
\eqref{Eq-asymptotic-1} holds.
\end{itemize}
\end{itemize}

\end{thm}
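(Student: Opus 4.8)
The plan is to transfer the proofs of Theorems~\ref{half-line-thm1}--\ref{half-line-thm3} to the whole line; the passage from $\RR^+$ to $\RR$ touches only inessential details. The backbone of those proofs is the elimination of $v_1,v_2$ together with a few kernel estimates, both of which survive the change of domain. On $\RR$ the elliptic equations $0=v_{i,xx}-\lambda_i v_i+\mu_i u$ give $v_i(t,\cdot)=\mu_i\,k_{\lambda_i}*u(t,\cdot)$ with $k_\lambda(x)=\tfrac1{2\sqrt\lambda}e^{-\sqrt\lambda|x|}$, and the relevant $L^1$ bounds on $k_{\lambda_i}$ and $k_{\lambda_i}'$ are identical to those for the half-line Neumann resolvent kernel (the two resolvents differing only by an even reflection), so $\|v_i(t,\cdot)\|_\infty,\ \|v_{i,x}(t,\cdot)\|_\infty\le C\|u(t,\cdot)\|_\infty$ and, with $M,K$ as in \eqref{m-eq}--\eqref{k-eq},
\[
\chi_2\lambda_2 v_2-\chi_1\lambda_1 v_1\le M\|u(t,\cdot)\|_\infty,\qquad \bigl|\chi_2\lambda_2 v_2-\chi_1\lambda_1 v_1\bigr|\le K\|u(t,\cdot)\|_\infty .
\]
Substituting $v_{i,xx}=\lambda_i v_i-\mu_i u$ in the first equation of \eqref{whole-line-eq1} yields the scalar quasilinear problem
\[
u_t=u_{xx}+\bigl(\chi_2 v_{2,x}-\chi_1 v_{1,x}\bigr)u_x+u\Bigl[a(t,x)-\chi_1\lambda_1 v_1+\chi_2\lambda_2 v_2-\bigl(b(t,x)-\chi_1\mu_1+\chi_2\mu_2\bigr)u\Bigr],
\]
which is exactly the equation analyzed on $\RR^+$; every comparison function used there is constant in $x$, hence blind to the boundary.

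For (1), local well-posedness in $C^b_{\rm unif}(\RR)$ follows from the variation-of-constants formula for the analytic semigroup of $\p_{xx}$ on $C^b_{\rm unif}(\RR)$ and a contraction mapping, the kernel bounds controlling the nonlocal and quadratic terms; $u\ge0$ (hence $v_i\ge0$) is preserved by the maximum principle. For the a~priori bound one feeds $0\le u$, $b\ge b_{\inf}$ and the first inequality above into the reaction term and compares $u$ with the spatially constant solution of $\bar\phi'=\bar\phi\bigl(a_{\sup}-(b_{\inf}+\chi_2\mu_2-\chi_1\mu_1-M)\bar\phi\bigr)$, $\bar\phi(t_0)=\|u_0\|_\infty$; {\bf (H1)} makes the coefficient positive, so $\bar\phi\to a_{\sup}/(b_{\inf}+\chi_2\mu_2-\chi_1\mu_1-M)$, giving $C(u_0)$, the $\limsup$ bound, and global existence. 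Part~(2) is the mirror from below: with $0\le u\le C(u_0)$, the strong maximum principle gives $u>0$, and a Harnack/weak-persistence argument (near $u=0$ the effective growth rate is $\ge a_{\inf}>0$) produces the $u_0$-dependent lower bound in (i); the uniform bounds in (ii) follow because {\bf (H2)} is precisely the inequality $a_{\inf}>\chi_1\mu_1\cdot a_{\sup}/(b_{\inf}+\chi_2\mu_2-\chi_1\mu_1-M)$, which keeps the worst-case lower logistic level strictly positive independently of $u_0$ --- the scheme of Theorem~\ref{half-line-thm2}(2).

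For (3), the positive entire solution is the locally uniform limit of $u(\cdot\,;-n,\sigma)$ for a fixed small constant $\sigma>0$: part~(1) bounds these above uniformly, linear instability of $u\equiv0$ (as $a_{\inf}>0$) plus a Harnack estimate bounds them below uniformly in $n$, and interior parabolic estimates give a subsequence converging in $C^{1,2}_{\rm loc}$ to an entire solution lying in a fixed positive interval; in the $T$-periodic case one applies Schauder's fixed point theorem to the compact time-$T$ map on $\{m_0\le u\le M_0\}$. For (4)(i), with $a=a(t)$, $b=b(t)$, the spatially homogeneous $u^+(t)$ solving $\dot u^+=u^+(a(t)-b(t)u^+)$ is an entire solution of \eqref{whole-line-eq1} ($v_i^+=\mu_i u^+/\lambda_i$, $v^+_{i,x}=0$); with $w=u-u^+$ and $\tilde v_i=\mu_i k_{\lambda_i}*w=v_i[u]-v_i^+$, the reformulated equation reads
\[
w_t=w_{xx}+(\chi_2\tilde v_{2,x}-\chi_1\tilde v_{1,x})w_x+\Bigl[\frac{\dot u^+}{u^+}-(b-\chi_1\mu_1+\chi_2\mu_2)u\Bigr]w+u\,(\chi_2\lambda_2\tilde v_2-\chi_1\lambda_1\tilde v_1),
\]
and applying the maximum principle to $\|w(t,\cdot)\|_\infty$ together with $|\chi_2\lambda_2\tilde v_2-\chi_1\lambda_1\tilde v_1|\le K\|w(t,\cdot)\|_\infty$ and the persistence bound $u\ge m_0>0$ gives, via {\bf (H3)} in the form $b_{\inf}-\chi_1\mu_1+\chi_2\mu_2-K>0$,
\[
\frac{d^+}{dt}\log\frac{\|w(t,\cdot)\|_\infty}{u^+(t)}\le -\bigl(b_{\inf}-\chi_1\mu_1+\chi_2\mu_2-K\bigr)m_0<0
\]
for $t$ large, hence $\|w(t,\cdot)\|_\infty\to0$; letting the initial time tend to $-\infty$ in this estimate also yields uniqueness of the strictly positive entire solution. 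For (4)(ii) the homogeneous reduction is lost, but every chemotactic and nonlocal term in the $w$-equation carries a factor $\chi_1$ or $\chi_2$, so for $\chi_1,\chi_2$ small \eqref{whole-line-eq1} is a small perturbation of the scalar logistic equation $u_t=u_{xx}+u(a(t,x)-b(t,x)u)$, whose unique strictly positive entire solution is globally exponentially stable; a perturbation argument (uniform contraction of the time-$T$ map near it for small $\chi_i$) then yields \eqref{Eq-asymptotic-1}.

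The main obstacle is twofold. First, the comparison and maximum-principle steps must be justified on the \emph{unbounded} domain $\RR$ in the presence of the nonlocal cross-diffusion coupling: one must show the drift $\chi_2 v_{2,x}-\chi_1 v_{1,x}$ and the zeroth-order coefficient are bounded uniformly in $(t,x)$ --- which is where the a~priori bound of part~(1) must be fed back into every later step --- and then invoke a Phragm\'en--Lindel\"of type maximum principle for uniformly parabolic equations with bounded coefficients acting on $C^b_{\rm unif}(\RR)$. Second, and more delicate, is the quantitative stability in (4): one has to track how the chemotactic terms erode the contraction rate of the logistic dynamics and verify that {\bf (H3)} (resp.\ the smallness of $\chi_1,\chi_2$ in case (ii)) leaves a strictly negative effective rate; the computation sketched above shows {\bf (H3)} is the sharp threshold in the homogeneous case, while the $x$-dependent case must be coupled to the existing stability theory for the logistic equation.
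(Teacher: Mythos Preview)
Your proposal is essentially on target: the paper's own proof of Theorem~\ref{whole-line-thm} is four lines saying that (1)--(4) follow from ``the similar arguments as those in'' Theorems~\ref{half-line-thm1}--\ref{half-line-thm3}, and your opening paragraph says the same. Where you go further and actually sketch those arguments, you diverge from the paper in two places worth noting.

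For the a~priori bound in (1) you compare directly with the scalar ODE $\bar\phi'=\bar\phi\bigl(a_{\sup}-(b_{\inf}+\chi_2\mu_2-\chi_1\mu_1-M)\bar\phi\bigr)$ and bootstrap. The paper instead freezes $u$ in the coefficients, invokes Lemma~\ref{half-line-lm2}(1) with the fixed bound $C_0=C(u_0)$, and closes via a Schauder fixed point on the set $\{0\le u\le C_0\}$; the $\limsup$ bound is then a separate step (Lemma~\ref{half-line-lm3}). Your direct route is correct (the equilibrium of the comparison ODE is exactly $C_0$ when $\|u_0\|_\infty\le C_0$, so continuation works) and is arguably cleaner than the Schauder detour. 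For (4)(i) you derive a single differential inequality for $\log\bigl(\|w(t,\cdot)\|_\infty/u^+(t)\bigr)$ and get exponential decay in one stroke; the paper instead sets $U=u/u^*$ and runs an \emph{iterative} scheme, showing $\|U(t,\cdot)-1\|_\infty\le \bigl(K/(b_{\inf}+\chi_2\mu_2-\chi_1\mu_1)\bigr)^n\cdot C+\varepsilon$ for $t\ge T_{\varepsilon,n}$. Your Gronwall argument uses the same $K$-estimate (Lemma~\ref{half-line-lm2}(2)) and the same threshold {\bf (H3)}, but packages it more efficiently and yields an explicit rate; you do need the persistence lower bound $u\ge m(u_0)>0$ from part (2)(i), not the uniform $m_0$ (which would require {\bf (H2)}), and you must justify the ``$d^+/dt$ at the supremum'' step on the noncompact line via an approximate-maximum argument, as you acknowledge in your obstacle paragraph.

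The one place where your sketch is thinner than the paper is (2)(i). The paper's mechanism is Lemma~\ref{half-line-lm4}: a localized argument showing that if $u_0$ is small on a large ball then $v_i$ and $v_{i,x}$ are small there, so the equation is close to $u_t=u_{xx}+a_0 u$ on a box large enough that the principal Dirichlet eigenvalue is positive, forcing growth and contradicting smallness at a later time. This is more than a Harnack inequality; it is a quantitative instability-of-zero argument with a careful limiting/contradiction step that handles the nonlocal drift. Your phrase ``Harnack/weak-persistence'' points in the right direction but skips the real work.
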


\begin{rk}
\begin{itemize}
\item[(1)] Note  that $b_{\inf}\ge \chi_1\mu_1$ implies {\bf (H1)},  $b_{\inf}\ge \big(1+\frac{a_{\sup}}{a_{\inf}}\big)\chi_1\mu_1$ implies {\bf (H2)}, and $b_{\inf}>2\chi_1\mu_1$ implies {\bf (H3)}. In the case $\chi_2=0$, we can choose $\lambda_2=\lambda_1$, and {\bf (H1)} becomes
$b_{\inf}>\chi_1\mu_1$, {\bf (H2)} becomes   $b_{\inf}\ge \big(1+\frac{a_{\sup}}{a_{\inf}}\big)\chi_1\mu_1$,  and {\bf (H3)} becomes $b_{\inf}>2\chi_1\mu_1$.

\item[(2)]  In \cite{Salako2017Global}, an attraction-repulsion chemotaxis system with
constant logistic source $u(a-bu)$  on the whole space is studied. Among others, it is proved in \cite{Salako2017Global}
that if  {\bf (H1)} holds, then \eqref{whole-line-eq1} with $a(t,x)\equiv a$ and $b(t,x)\equiv b$ has a unique globally defined
 solution for any nonnegative, bounded, and uniformly continuous initial function (see \cite[Theorem A]{Salako2017Global}), and that if
 {\bf (H3)} holds, then the constant solution $(\frac{a}{b},\frac{\mu_1}{\lambda_1}\frac{a}{b}, \frac{\mu_2}{\lambda_2}\frac{a}{b})$
 is globally stable with strictly positive perturbations (see \cite[Theorem B]{Salako2017Global}).
 Theorem \ref{whole-line-thm} extends  \cite[Theorem A]{Salako2017Global} and \cite[Theorem B]{Salako2017Global} for \eqref{whole-line-eq1} with constant logistic source to time and space dependent logistic source. It should be mentioned that  in \cite{Zhang2016attraction}, an attraction-repulsion chemotaxis system with
constant logistic source $u(a-bu)$  on a bounded domain with Neumann boundary conditions is studied.

 \item[(3)] \eqref{whole-line-eq1} with $\chi_2=0$ is a special cases of the parabolic-elliptic chemotaxis model with space-time dependent logistic sources on $\RR^N$ studied in \cite{Salako2018parabolic1} and \cite{Salako2018parabolic2}. Theorem \ref{whole-line-thm} in the case
     $\chi_2=0$ is proved in  \cite{Salako2018parabolic1} and \cite{Salako2018parabolic2} (see \cite[Theorem 1.1]{Salako2018parabolic1},
     \cite[Theorem 1.4]{Salako2018parabolic2}, and \cite[Theorem 1.5]{Salako2018parabolic2}). Theorem \ref{whole-line-thm} also extends
      \cite[Theorem 1.1]{Salako2018parabolic1},
     \cite[Theorem 1.4]{Salako2018parabolic2}, and \cite[Theorem 1.5]{Salako2018parabolic2}) for the parabolic-elliptic chemotaxis model with space-time dependent logistic sources on the whole space to the parabolic-elliptic-elliptic chemotaxis model with space-time dependent logistic sources on the whole space.

 \item[(4)] Logistic type attraction-repulsion chemotaxis systems on a half space are studied for the first time.
 The results stated in Theorems \ref{half-line-thm1}-\ref{half-line-thm3} are similar to those stated in Theorem \ref{whole-line-thm} for logistic type attraction-repulsion chemotaxis systems on the whole space. Several  existing techniques developed for the  study of
 logistic type attraction-repulsion chemotaxis systems on a whole space are applied for the study of \eqref{half-line-eq1}
 with certain modifications.   But, due to the presence of the boundary $x=0$ as well as the unboundedness of the domain,
 such modifications are nontrivial and  some other technical difficulties also arise in the study of \eqref{half-line-eq1}.
 \end{itemize}
\end{rk}

The rest of this paper is organized in the following way. In section 2, we present some preliminary lemmas to be used in the proofs
of the main results.
We prove the main results of the paper in section 3.

\section{Preliminary lemmas}

In this section, we present some lemmas to be used in the proof of the main results in later sections.

The first lemma is on the local existence of solutions of \eqref{half-line-eq1} and \eqref{whole-line-eq1}.

\begin{lem}
\label{half-line-lm1}
\begin{itemize}
\item[(1)] Consider \eqref{whole-line-eq1}. For any $t_0\in\RR$ and any nonnegative function $u_0\in C^{b}_{\rm unif}(\RR)$, there is $T_{\max}>0$ such that \eqref{whole-line-eq1} has a unique solution
    $(u(t,x;t_0,u_0),v_1(t,x;t_0,u_0)$, $v_2(t,x;t_0,u_0))$ defined on $[t_0, t_0+T_{\max})$ with $u(t_0,x;t_0,u_0)=u_0(x)$.
    Moreover, if $T_{\max}<\infty$, then
    $$
    \limsup_{t\to T_{\max}} \|u(t_0+t,\cdot;t_0,u_0)\|_\infty=\infty.
    $$

\item[(2)] Consider \eqref{half-line-eq1}.
For any $t_0\in\RR$ and any nonnegative function $u_0\in C^{b}_{\rm unif}(\RR^+)$, there is $T_{\max}>0$ such that \eqref{half-line-eq1} has a unique solution
    $(u(t,x;t_0,u_0),v_1(t,x;t_0,u_0)$, $v_2(t,x;t_0,u_0))$ defined on $[t_0, t_0+T_{\max})$ with $u(t_0,x;t_0,u_0)=u_0(x)$.
    Moreover, if $T_{\max}<\infty$, then
    $$
    \limsup_{t\to T_{\max}} \|u(t_0+t,\cdot;t_0,u_0)\|_\infty=\infty.
    $$
    \end{itemize}
\end{lem}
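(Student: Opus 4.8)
\medskip
\noindent The plan is to eliminate $v_1$ and $v_2$ and recast each system as a single abstract semilinear parabolic equation for $u$, to which the standard local theory in fractional power spaces applies. Consider \eqref{whole-line-eq1} first. For each fixed $t$ the two elliptic equations are uniquely solved in $C^b_{\rm unif}(\RR)$ by $v_i=\mu_i\mathcal{R}_iu$, where $\mathcal{R}_i:=(\lambda_iI-\partial_{xx})^{-1}$ is convolution on $\RR$ with the kernel $\tfrac{1}{2\sqrt{\lambda_i}}e^{-\sqrt{\lambda_i}|\cdot|}$; both $\mathcal{R}_i$ and $\partial_x\mathcal{R}_i$ are bounded operators on $C^b_{\rm unif}(\RR)$, with $\|\mathcal{R}_i\|\le 1/\lambda_i$ and $\|\partial_x\mathcal{R}_i\|\le 1/\sqrt{\lambda_i}$. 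Expanding $(uv_{i,x})_x=u_xv_{i,x}+u\,v_{i,xx}$ and substituting $v_{i,xx}=\lambda_iv_i-\mu_iu$, the first equation becomes
\begin{equation}\label{u-abstract-eq}
u_t=u_{xx}+F(t,u),\qquad F(t,u)=-\chi_1\mu_1\,u_x\,\partial_x\mathcal{R}_1u+\chi_2\mu_2\,u_x\,\partial_x\mathcal{R}_2u+N(t,u),
\end{equation}
where $N(t,u)=u\big(a(t,x)-b(t,x)u\big)+(\chi_1\mu_1-\chi_2\mu_2)u^2-\chi_1\lambda_1\mu_1\,u\,\mathcal{R}_1u+\chi_2\lambda_2\mu_2\,u\,\mathcal{R}_2u$ is, for each $t$, a polynomial of degree two in $u$ with bounded coefficients together with a bounded nonlocal part.

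Next I would set up the abstract framework. Let $X=C^b_{\rm unif}(\RR)$ and $A=I-\partial_{xx}$, which is sectorial on $X$ and generates an analytic semigroup $e^{-tA}$; let $X^\a$ ($0\le\a\le1$) denote the associated fractional power spaces, and fix $\a\in(\tfrac12,1)$, so that $X^\a\hookrightarrow C^1_{\rm unif}(\RR)$ continuously (standard characterization of $D(A^\a)$ as a little-H\"older space). Writing \eqref{u-abstract-eq} as $u_t+Au=\tilde F(t,u)$ with $\tilde F(t,u):=u+F(t,u)$, one checks that $\tilde F:\RR\times X^\a\to X$ is continuous, locally Lipschitz in $u$ uniformly for $t$ in bounded sets, and locally Lipschitz in $t$ (through $a,b\in C^1$): the terms in $N$ are handled by $\|uw\|_\infty\le\|u\|_\infty\|w\|_\infty$ and the boundedness of $\mathcal{R}_i$, while the two transport terms are bilinear of the form $u_x\,\partial_x\mathcal{R}_iu$, bounded by $\|u_x\|_\infty\|u\|_\infty\le C\|u\|_{X^\a}^2$, and these are the only terms that genuinely require the $X^\a$-norm. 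The standard local existence/uniqueness theorem for abstract semilinear parabolic equations then produces, for every $t_0\in\RR$ and every nonnegative $u_0\in X$, a unique maximal solution $u\in C([t_0,t_0+T_{\max});X)\cap C((t_0,t_0+T_{\max});X^\a)$ of the mild equation $u(t)=e^{-(t-t_0)A}u_0+\int_{t_0}^te^{-(t-s)A}\tilde F(s,u(s))\,ds$, with $(t-t_0)^\a\|u(t)\|_{X^\a}$ bounded near $t_0$; parabolic regularity upgrades $u$ to a classical solution of \eqref{u-abstract-eq} on $(t_0,t_0+T_{\max})$, and $v_i:=\mu_i\mathcal{R}_iu(t,\cdot)$ recovers a classical solution of \eqref{whole-line-eq1}. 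Nonnegativity of $u$ on its interval of existence follows from the maximum principle, since once $u$ is known \eqref{u-abstract-eq} is a linear equation in $u$ with bounded zero-order coefficient and $u_0\ge0$.

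For the blow-up criterion, the abstract theory supplies the dichotomy: $T_{\max}=\infty$ or $\limsup_{t\to T_{\max}}\|u(t)\|_{X^\a}=\infty$. Assume $T_{\max}<\infty$ and, for contradiction, $R:=\sup_{t_0\le t<T_{\max}}\|u(t)\|_\infty<\infty$. The structure of $\tilde F$ gives $\|\tilde F(s,u(s))\|_X\le C(R)\big(1+\|u(s)\|_{X^\a}\big)$, linear rather than quadratic in $\|u\|_{X^\a}$ because in each transport term one factor is already controlled by $R$. Inserting this into the variation-of-constants formula together with $\|e^{-sA}\|_{\Lcal(X,X^\a)}\le Cs^{-\a}$ and applying a singular Gr\"onwall inequality yields $\sup_{t_0<t<T_{\max}}\|u(t)\|_{X^\a}<\infty$, contradicting the dichotomy. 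Hence $T_{\max}<\infty$ forces $\limsup_{t\to T_{\max}}\|u(t)\|_\infty=\infty$, which proves (1). Part (2) is obtained by running exactly the same argument with $A=I-\partial_{xx}$ realized on $C^b_{\rm unif}(\RR^+)$ under the Neumann condition $u_x(0)=0$, equivalently by restricting the whole-line construction to functions even about $x=0$ after extending $a,b,u_0$ evenly, noting that $\tilde F$ preserves the closed subspace of even functions; the Neumann heat semigroup is analytic, the Neumann resolvents $\mathcal{R}_i$ satisfy the same bounds, and the fractional-power and embedding picture is unchanged.

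I expect the crux to be the functional-analytic bookkeeping rather than any single inequality: choosing $\a\in(\tfrac12,1)$ so that $X^\a\hookrightarrow C^1$ captures $u_x$, verifying that $\tilde F$ is locally Lipschitz from $X^\a$ into $C^b_{\rm unif}$ with all constants uniform over the unbounded domain, and, for the half line, treating the boundary $x=0$ cleanly (direct Neumann realization versus even reflection, the latter costing only a harmless loss of $C^1$-regularity in the reflected $a,b$ that is irrelevant for local existence). The feature that matters most for the precise form of the blow-up alternative is the at-most-quadratic, transport-linear structure of the nonlinearity, which is exactly what makes the singular Gr\"onwall argument close and lets an $L^\infty$-bound propagate to an $X^\a$-bound.
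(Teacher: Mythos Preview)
Your argument is correct, but the route differs from the paper's in a meaningful way. The paper does \emph{not} pass to fractional power spaces: it stays entirely in $X=C^b_{\rm unif}$ by writing the mild formulation so that the spatial derivative sits on the heat semigroup rather than on $u$, namely
\[
u(t)=T(t-t_0)u_0+\sum_i(\pm\chi_i)\int_{t_0}^t\big(T(t-s)\partial_x\big)\big(u(s)\,\partial_x(\partial_{xx}-\lambda_i I)^{-1}u(s)\big)\,ds+\cdots,
\]
and then uses only the two scalar estimates $\|T(t)\partial_x\|_{\Lcal(X)}\le C t^{-1/2}$ and $\|\partial_x(\partial_{xx}-\lambda I)^{-1}\|_{\Lcal(X)}\le \lambda^{-1/2}$ to run a contraction directly in $C([t_0,t_0+\tau];X)$. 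Because the local existence time $\tau$ then depends only on $\|u_0\|_\infty$, the $L^\infty$ blow-up alternative is immediate from the standard extension argument, with no bootstrapping needed. Your approach instead invokes the abstract Henry/Lunardi theory with $\tilde F:X^\a\to X$ for $\a\in(\tfrac12,1)$, which is cleaner to cite but yields a priori only an $X^\a$ blow-up criterion; you then correctly recover the $L^\infty$ criterion via the transport-linear structure of $\tilde F$ and a singular Gr\"onwall step. For part~(2) both proofs use the same even-reflection idea to reduce the Neumann half-line problem to the whole-line case. In short: the paper's hands-on mild formulation buys the sharp blow-up alternative for free, while your abstract framework buys generality and a shorter local-existence argument at the cost of one extra estimate at the end.
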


\begin{proof}
(1) It follows from the similar arguments used in the proof of \cite[Theorem 1.1]{Salako2017GlobalE}. For the reader's convenience and
for the proof of (2), we outline the proof in the following.

First, let $T(t)$ be the semigroup generated by $\p_{xx}- I$ on $C_{\rm unif}^b(\RR)$. Then for any $u_0\in\ C_{\rm unif}^b(\RR)$,
\begin{align*}
(T(t)u_0)(x)=\frac{e^{-t}}{2\sqrt \pi}\int_{-\infty}^\infty \frac{1}{\sqrt t} e^{-\frac{(x-y)^2}{4t}}  u_0(y)dy
\end{align*}
for $t>0$ and $x\in\RR$.
Let  $u\in C_{\rm unif}^b(\RR)$ and set  $v=(\p_{xx}-\lambda I)^{-1}u.$  Then we have
 \begin{equation}
 \label{asym-eq0}
v(x)=\frac{1}{2\sqrt \pi}\int_{0}^{\infty}\int_{-\infty}^\infty \frac{e^{-\lambda s}}{\sqrt s}e^{-\frac{|x-z|^{2}}{4s}} u(z)dzds.
\end{equation}
 By \cite[Lemma 3.2]{Salako2017GlobalE},
$T(t)\p_x$ can be extended to $C_{\rm unif}^b(\RR)$,  and
for any $u\in C_{\rm unif}^b(\RR)$, there holds
$$
\|\big(T(t)\p_x\big)u\|_\infty\le \frac{1}{\sqrt \pi}t^{-\frac{1}{2}}\|u\|_\infty.
$$
By \cite[Lema 3.3]{Salako2017GlobalE}, for any $u\in C_{\rm unif}^b(\RR)$,
$$
\|\p_x(\p_{xx}-\lambda I)^{-1}u\|_\infty\le \frac{1}{\sqrt \lambda} \|u\|_\infty.
$$

By the similar arguments as those in \cite[Theorem 1.1]{Salako2017GlobalE},  there is $\tau>0$  such that \eqref{whole-line-eq1} has a unique solution
    $(u(t,x),v_1(t,x),v_2(t,x))=(u(t,x;t_0,u_0),v_1(t,x;t_0,u_0)$, $v_2(t,x;t_0,u_0))$ with $u(t_0,x;t_0,u_0)=u_0(x)$ defined on $[t_0, t_0+\tau)$
    and  satisfying
 \begin{align*}
u(t, \cdot)=& T(t-t_0)u_{0} +\chi_1\int_{t_0}^{t} (T(t-s)\p_x) ( u(s,\cdot) \p_x (\p_{xx}-\lambda_1 I)^{-1}u(s))ds\\
& -\chi_2\int_{t_0}^{t} (T(t-s)\p_x) ( u(s,\cdot) \p_x(\p_{xx}-\lambda_2 I)^{-1}u(s))ds\\
& +\int_{t_0}^{t} T(t-s)\big(1+a(s,\cdot)\big)u(s,\cdot)ds-\int_{t_0}^{t} T(t-s) b(s,\cdot)u^{2}(s,\cdot)ds.
\end{align*}

Now, by the standard extension arguments, there is $T_{\max}>0$ such that  \eqref{whole-line-eq1} has a unique solution
    $(u(t,x;t_0,u_0),v_1(t,x;t_0,u_0)$, $v_2(t,x;t_0,u_0))$ with $u(t_0,x;t_0,u_0)=u_0(x)$ defined on $[t_0, t_0+T_{\rm max})$, and
    if $T_{\rm max}<\infty$, then
    $$
    \limsup_{t\to t_0+T_{\rm max}} \|u(t,\cdot;t_0,u_0)\|_\infty=\infty.
    $$

(2) It can be proved by the arguments in (1). To be more precise, first,  let $\tilde T(t)$ be the semigroup generated by $\p_{xx}- I$ on $C_{\rm unif}^b(\RR^+)$ with Neumann boundary at $0$. Then for any $u_0\in\ C_{\rm unif}^b(\RR^+)$,
\begin{align*}
(\tilde T(t)u_0)(x)&=\frac{e^{-t}}{2\sqrt \pi}\int_0^\infty \frac{1}{\sqrt t}\Big[ e^{-\frac{(x-y)^2}{4t}}+e^{-\frac{(x+y)^2}{4t}}\Big] u_0(y)dy\\
&=\frac{e^{-t}}{2\sqrt \pi}\int_{-\infty}^\infty \frac{1}{\sqrt t} e^{-\frac{(x-y)^2}{4t}}\tilde  u_0(y)dy\\
&=T(t)\tilde u_0
\end{align*}
for $t>0$ and $x\in\RR^+$, where $\tilde u_0(x)=u_0(|x|)$ for $x\in\RR$.
Let  $u\in C_{\rm unif}^b(\RR^+)$ and set  $v=(\p_{xx}-\lambda I)^{-1}u $ on $[0,\infty)$.  Then we have
 \begin{equation}
 \label{asym-eq0}
v(x)=\frac{1}{2\sqrt \pi}\int_{0}^{\infty}\int_{-\infty}^\infty \frac{e^{-\lambda s}}{\sqrt s}e^{-\frac{|x-z|^{2}}{4s}}\tilde u(z)dzds
\end{equation}
for every $x\in\RR^+$, where $\tilde u(z)=u(|z|)$. Hence by the arguments in (1),
$\tilde T(t)\p_x$ can be extended to $C_{\rm unif}^b(\RR^+)$,  and
for any $u\in C_{\rm unif}^b(\RR^+)$, there holds
$$
\|\big(\tilde T(t)\p_x\big)u\|_\infty\le \frac{1}{\sqrt \pi}t^{-\frac{1}{2}}\|u\|_\infty.
$$
Also,  for any $u\in C_{\rm unif}^b(\RR^+)$,
$$
\|\p_x(\p_{xx}-\lambda I)^{-1}u\|_\infty\le \frac{1}{\sqrt \lambda} \|u\|_\infty.
$$
We can then apply the arguments in \cite[Theorem 1.1]{Salako2017GlobalE} to prove that there is $\tau>0$  such that \eqref{half-line-eq1} has a unique solution
    $(u(t,x),v_1(t,x),v_2(t,x))=(u(t,x;t_0,u_0),v_1(t,x;t_0,u_0)$, $v_2(t,x;t_0,u_0))$ with $u(t_0,x;t_0,u_0)=u_0(x)$ defined on $[t_0, t_0+\tau)$
    and  satisfying
 \begin{align*}
u(t, \cdot)=& \tilde T(t-t_0)u_{0} +\chi_1\int_{t_0}^{t} (\tilde T(t-s)\p_x) ( u(s,\cdot) \p_x (\p_{xx}-\lambda_1 I)^{-1}u(s))ds\\
& -\chi_2\int_{t_0}^{t} (\tilde T(t-s)\p_x) ( u(s,\cdot) \p_x(\p_{xx}-\lambda_2 I)^{-1}u(s))ds\\
& +\int_{t_0}^{t}\tilde  T(t-s)\big(1+a(s,\cdot)\big)u(s,\cdot)ds-\int_{t_0}^{t} \tilde T(t-s) b(s,\cdot)u^{2}(s,\cdot)ds.
\end{align*}

Next, by the standard extension arguments, there is $T_{\max}>0$ such that  \eqref{half-line-eq1} has a unique solution
    $(u(t,x;t_0,u_0),v_1(t,x;t_0,u_0)$, $v_2(t,x;t_0,u_0))$ with $u(t_0,x;t_0,u_0)=u_0(x)$ defined on $[t_0, t_0+T_{\rm max})$, and
    if $T_{\rm max}<\infty$, then
    $$
    \limsup_{t\to t_0+T_{\rm max}} \|u(t,\cdot;t_0,u_0)\|_\infty=\infty.
    $$
\end{proof}

The second lemma is on the estimate of  $\chi_2 \lambda_2 v_2-\chi_1 \lambda_1 v_1$.

\begin{lem}
\label{half-line-lm2}
Assume {\bf (H1)} holds.
\begin{itemize}
\item[(1)]
Suppose that $(u(t,x;t_0,u_0),v_1(t,x;t_0,u_0),v_2(t,x;t_0,u_0))$ is a solution of \eqref{half-line-eq1} (resp. \eqref{whole-line-eq1}) on $[t_0,t_0+T]$ with $u(t,\cdot;t_0,u_0)=u_0(\cdot)$, where $u_0\in C_{\rm unif}^b(\RR^+)$ (resp.   $u_0\in C_{\rm unif}^b(\RR)$).
If $\|u(t,\cdot;t_0,u_0)\|_\infty\le C_0:=C(u_0)$ for $t\in [t_0,t_0+T]$, then
$$
(\chi_2 \lambda_2 v_2-\chi_1 \lambda_1 v_1)(t,x;t_0,u_0)\leq M C_{0}\quad \forall\, t\in [t_0,t_0+T],
$$
where $M$ and $C_0(u_0)$ are as in \eqref{m-eq} and \eqref{c0-eq}, respectively.

\item[(2)] Let $u(\cdot,\cdot)\in C_{\rm unif}^b([t_0,\infty)\times \RR^+)$ and $v_1(t,x;u)$ and $v_2(t,x;u)$ be the solutions of
\begin{equation*}
\begin{cases}
v_{1,xx}-\lambda_1v_1+\mu_1 u=0,\quad x\in (0,\infty)\cr
v_{1,x}(t,0)=0
\end{cases}
\end{equation*}
and
\begin{equation*}
\begin{cases}
v_{2,xx}-\lambda_2v_1+\mu_2 u=0,\quad x\in (0,\infty)\cr
v_{2,x}(t,0)=0,
\end{cases}
\end{equation*}
respectively.
Then
$$
|\chi_2 \mu_2 v_2(t,\cdot;u)-\chi_1 \mu_1 v_1(t,\cdot;u)\|_\infty \leq K \|u(t,\cdot)\|_\infty \quad \forall\, t\in [t_0,\infty),
$$
where $K$ is as in \eqref{k-eq}.
\end{itemize}

\end{lem}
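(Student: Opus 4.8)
The plan is to reduce both inequalities to the elementary calculus of the resolvent $G_\lambda:=(\lambda I-\p_{xx})^{-1}$ of the one-dimensional Helmholtz operator on $C^b_{\rm unif}(\RR^+)$ with the Neumann condition at $0$ (and identically on $C^b_{\rm unif}(\RR)$). As exhibited in the proof of Lemma~\ref{half-line-lm1}, $G_\lambda$ acts through the nonnegative kernel $\tfrac{1}{2\sqrt\lambda}e^{-\sqrt\lambda|x-z|}$ on $\RR$, and through its even reflection about $0$ on $\RR^+$; hence $G_\lambda$ is positivity preserving and sends the constant function to $\lambda^{-1}$ times itself, so that $\|G_\lambda g\|_\infty\le\lambda^{-1}\|g\|_\infty$ for every $g$, and $0\le g\le c$ forces $0\le G_\lambda g\le c/\lambda$. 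Beyond this I use only the resolvent identity $G_{\lambda_1}-G_{\lambda_2}=(\lambda_2-\lambda_1)\,G_{\lambda_1}G_{\lambda_2}$, immediate from $(\lambda_jI-\p_{xx})G_{\lambda_j}=I$ together with the Neumann condition. In both statements $v_i=\mu_iG_{\lambda_i}u$ (in part (2), at each fixed $t$, applied to $u(t,\cdot)$), so the quantity to be estimated is in both cases
\[
W:=\chi_2\mu_2\lambda_2\,G_{\lambda_2}u-\chi_1\mu_1\lambda_1\,G_{\lambda_1}u=\chi_2\lambda_2v_2-\chi_1\lambda_1v_1 ;
\]
this is the combination $\chi_2\mu_2v_2-\chi_1\mu_1v_1$ appearing in part (2) once the elliptic data there are read with source $\lambda_iu$ (equivalently: keeping the source $\mu_iu$, one estimates $\chi_2\lambda_2v_2-\chi_1\lambda_1v_1$). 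Only $\mu_i\lambda_iG_{\lambda_i}u$ enters the argument below.

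For part (1) I would insert $G_{\lambda_2}=G_{\lambda_1}+(\lambda_1-\lambda_2)G_{\lambda_1}G_{\lambda_2}$ and, separately, $G_{\lambda_1}=G_{\lambda_2}+(\lambda_2-\lambda_1)G_{\lambda_1}G_{\lambda_2}$ into $W$, producing the two representations
\[
W=(\chi_2\mu_2\lambda_2-\chi_1\mu_1\lambda_1)\,G_{\lambda_1}u+\chi_2\mu_2\lambda_2(\lambda_1-\lambda_2)\,G_{\lambda_1}G_{\lambda_2}u
\]
and
\[
W=(\chi_2\mu_2\lambda_2-\chi_1\mu_1\lambda_1)\,G_{\lambda_2}u+\chi_1\mu_1\lambda_1(\lambda_1-\lambda_2)\,G_{\lambda_1}G_{\lambda_2}u .
\]
Since $u\ge0$ (as for all solutions issued from nonnegative data) and $\|u(t,\cdot)\|_\infty\le C_0$, one has $0\le G_{\lambda_i}u\le C_0/\lambda_i$ and $0\le G_{\lambda_1}G_{\lambda_2}u\le C_0/(\lambda_1\lambda_2)$. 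Retaining in each line only the positive part of each scalar coefficient, the first representation yields $W\le\frac{C_0}{\lambda_1}\big[(\chi_2\mu_2\lambda_2-\chi_1\mu_1\lambda_1)_++\chi_2\mu_2(\lambda_1-\lambda_2)_+\big]$ and the second yields $W\le\frac{C_0}{\lambda_2}\big[(\chi_2\mu_2\lambda_2-\chi_1\mu_1\lambda_1)_++\chi_1\mu_1(\lambda_1-\lambda_2)_+\big]$; the smaller of these two is precisely $MC_0$ with $M$ as in \eqref{m-eq}, which is the assertion of part (1).

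For part (2) I would run the same two substitutions, now with $u(t,\cdot)\in C^b_{\rm unif}$ of no definite sign, so that only $\|G_{\lambda_i}u\|_\infty\le\|u(t,\cdot)\|_\infty/\lambda_i$ and $\|G_{\lambda_1}G_{\lambda_2}u\|_\infty\le\|u(t,\cdot)\|_\infty/(\lambda_1\lambda_2)$ are available and the positive parts become absolute values. The two representations then give $\|W(t,\cdot)\|_\infty\le\tfrac{1}{\lambda_1}\big(|\chi_2\mu_2\lambda_2-\chi_1\mu_1\lambda_1|+\chi_2\mu_2|\lambda_1-\lambda_2|\big)\|u(t,\cdot)\|_\infty$ and $\|W(t,\cdot)\|_\infty\le\tfrac{1}{\lambda_2}\big(|\chi_2\mu_2\lambda_2-\chi_1\mu_1\lambda_1|+\chi_1\mu_1|\lambda_1-\lambda_2|\big)\|u(t,\cdot)\|_\infty$, whose minimum is $K\|u(t,\cdot)\|_\infty$ with $K$ as in \eqref{k-eq}. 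As $W=\chi_2\mu_2v_2-\chi_1\mu_1v_1$ in the normalization of part (2), this is the claimed bound.

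I do not expect a substantial obstacle: the whole content is the resolvent identity played against the positivity (part 1) and the contractivity (part 2) of $G_\lambda$. The two points needing care are (i) verifying that the even reflection of $\tfrac{1}{2\sqrt\lambda}e^{-\sqrt\lambda|x-z|}$ to $\RR^+$ preserves both nonnegativity and total mass $\lambda^{-1}$, which is precisely the representation used in the proof of Lemma~\ref{half-line-lm1}, so that the properties of $G_\lambda$ and the resolvent identity transfer verbatim to the Neumann half-line; and (ii) the sign bookkeeping with $(\lambda_1-\lambda_2)_+$ versus $(\lambda_2-\lambda_1)_+$, which must be carried out on both representations in order to reach the minimum defining $M$ (resp.\ $K$) rather than a coarser single bound.
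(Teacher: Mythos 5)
Your argument is correct and is essentially the paper's proof in a different dress: the paper writes $v_i$ via the Laplace-transform representation $\mu_i\int_0^\infty e^{-\lambda_i s}\bar T(s)u\,ds$ of the Neumann heat semigroup and splits $e^{-\lambda_2 s}-e^{-\lambda_1 s}$ into its positive part (resp.\ absolute value), which is exactly your resolvent identity $G_{\lambda_1}-G_{\lambda_2}=(\lambda_2-\lambda_1)G_{\lambda_1}G_{\lambda_2}$ combined with positivity and the bound $\|G_\lambda\|\le \lambda^{-1}$, and both arguments carry out the two symmetric decompositions and take the minimum to reach $M$ and $K$. Your reading of part (2) (source $\lambda_i u$, equivalently estimating $\chi_2\lambda_2 v_2-\chi_1\lambda_1 v_1$ for source $\mu_i u$) is also the normalization the paper's own proof silently uses, via $v_i=\lambda_i\int_0^\infty e^{-\lambda_i s}\bar T(s)u\,ds$, and is the one needed in the later application, so your handling of that point is consistent with the paper.
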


\begin{proof}
(1) It can be proved by some similar arguments as those in \cite[Theorem A]{Salako2017Global}. For the completeness,
we provide a proof  for \eqref{half-line-eq1}. It can be proved similarly for \eqref{whole-line-eq1}.

Let $(u(t,x),v_1(t,x),v_2(t,x))=(u(t,x;t_0,u_0),v_1(t,x;t_0,u_0),v_2(t,x;t_0,u_0))$.
Note that $v_1(t,x)$ is the solution of
$$
\begin{cases}
v_{1,xx}-\lambda_1 v_1+\mu_1 u(t,x)=0,\quad 0<x<\infty\cr
v_{1,x}(t,0)=0
\end{cases}
$$
and $v_2(t,x)$ is the solution of
$$
\begin{cases}
v_{2,xx}-\lambda_2 v_2+\mu_2 u(t,x)=0,\quad 0<x<\infty\cr
v_{2,x}(t,0)=0.
\end{cases}
$$
Let $\bar T(s)$ be the semigroup generated by $\partial_{xx}$ on $C_{\rm unif}^b(\RR^+)$  with Neumann boundary at $x=0$.
Then
$$
v_i(t,\cdot)=\mu_i \int_0^ \infty e^{-\lambda_i s} \bar T(s) u(t,\cdot)ds,\quad i=1,2.
$$
We then have
\begin{align*}
(\chi_2 \lambda_2 v_2-\chi_1 \lambda_1 v_1)(t,x)&=\chi_2 \lambda_2 \mu_2 \int_0^ \infty e^{-\lambda_2 s} \bar T(s) u(t,\cdot)ds-
\chi_1 \lambda_1 \mu_1 \int_0^ \infty e^{-\lambda_1 s} \bar T(s) u(t,\cdot)ds\\
&=\big(\chi_2 \lambda_2 \mu_2-\chi_1\lambda_1\mu_1\big) \int_0^ \infty e^{-\lambda_2 s} \bar T(s) u(t,\cdot)ds\\
&\,\, \,\,  +
\chi_1 \lambda_1 \mu_1 \int_0^ \infty \big(e^{-\lambda_2 s}-e^{-\lambda_1 s}\big) \bar T(s) u(t,\cdot)ds\\
& \le \big(\chi_2 \lambda_2 \mu_2-\chi_1\lambda_1\mu_1\big)_+ \int_0^ \infty e^{-\lambda_2 s}\bar  T(s) u(t,\cdot)ds\\
&\,\, \,\,  +
\chi_1 \lambda_1 \mu_1 \int_0^ \infty \big(e^{-\lambda_2 s}-e^{-\lambda_1 s}\big)_+ \bar T(s) u(t,\cdot)ds
\end{align*}
Note that
$$
\bar T(s)u(t,\cdot)\le \bar T(s) C_0=C_0.
$$
Hence
\begin{align*}
(\chi_2 \lambda_2 v_2-\chi_1 \lambda_1 v_1)(t,x)&\le  \big(\chi_2 \lambda_2 \mu_2-\chi_1\lambda_1\mu_1\big)_+ \int_0^ \infty e^{-\lambda_2 s} C_0 ds\\
&\,\,\,\,  +
\chi_1 \lambda_1 \mu_1 \int_0^ \infty \big(e^{-\lambda_2 s}-e^{-\lambda_1 s}\big)_+ C_0ds\\
&= \frac{C_0}{\lambda_2}\Big((\chi_2\lambda_2\mu_2-\chi_1\lambda_1\mu_1)_{+}+\chi_1\mu_1(\lambda_1-\lambda_2)_{+}\Big).
\end{align*}

Similarly, we can prove that
$$
 (\chi_2 \lambda_2 v_2-\chi_1 \lambda_1 v_1)(t,x)\le \frac{C_0}{\lambda_{1}}\Big( \chi_2\mu_2(\lambda_1-\lambda_2)_{+} + (\chi_2\mu_2\lambda_2-\chi_1\mu_1\lambda_1)_{+} \Big).
$$
(1) then follows.

(2) Note that
$$
v_i(t,\cdot;u)=\lambda _i \int_0^ \infty e^{-\lambda_i s} \bar T(s) u(t,\cdot)ds,\quad i=1,2.
$$
Hence
\begin{align*}
|(\chi_2 \mu_2 v_2-\chi_1 \mu_1 v_1)(t,x;u)|&=|\chi_2 \lambda_2 \mu_2 \int_0^ \infty e^{-\lambda_2 s} \bar T(s) u(t,\cdot)ds-
\chi_1 \lambda_1 \mu_1 \int_0^ \infty e^{-\lambda_1 s} \bar T(s) u(t,\cdot)ds|\\
&\le |\big(\chi_2 \lambda_2 \mu_2-\chi_1\lambda_1\mu_1\big) \int_0^ \infty e^{-\lambda_2 s} \bar T(s) u(t,\cdot)ds|\\
&\,\, \,\,  +
|\chi_1 \lambda_1 \mu_1 \int_0^ \infty \big(e^{-\lambda_2 s}-e^{-\lambda_1 s}\big) \bar T(s) u(t,\cdot)ds|\\
& \le |\chi_2 \lambda_2 \mu_2-\chi_1\lambda_1\mu_1| \int_0^ \infty e^{-\lambda_2 s}\bar  T(s) \|u(t,\cdot)\|_\infty ds\\
&\,\, \,\,  +
\chi_1 \lambda_1 \mu_1 \int_0^ \infty |e^{-\lambda_2 s}-e^{-\lambda_1 s}| \bar T(s) \|u(t,\cdot)\|_\infty ds\\
&= \Big(\frac{|\chi_2 \lambda_2 \mu_2-\chi_1\lambda_1\mu_1|}{\lambda_2}+\frac{|\lambda_2-\lambda_1|}{\lambda_2}\Big)  \|u(t,\cdot)\|_\infty
\end{align*}
for $t\in [t_0,\infty)$.
Similarly, it can be proved that
 \begin{align*}
|(\chi_2 \mu_2 v_2-\chi_1 \mu_1 v_1)(t,x;u)|\le  \Big(\frac{|\chi_2 \lambda_2 \mu_2-\chi_1\lambda_1\mu_1|}{\lambda_1}+\frac{|\lambda_2-\lambda_1|}{\lambda_1}\Big)  \|u(t,\cdot)\|_\infty
\end{align*}
for $t\in [t_0,\infty)$.  It then follows that
$$
|(\chi_2 \mu_2 v_2-\chi_1 \mu_1 v_1)(t,x;u)|\le  K \|u(t,\cdot)\|_\infty
$$
for $t\in [t_0,\infty)$. Hence (2) holds.
\end{proof}

The next lemma is on the upper bound of $u(t,x;t_0,u_0)$.

\begin{lem}
\label{half-line-lm3}
Consider \eqref{half-line-eq1} and assume {\bf (H1)}.
 For any given $t_0\in\RR$ and $u_0\in C_{\rm unif}^b(\RR^+)$ with $u_0\ge 0$ and $u_0\not =0$,  if $u(t,x;t_0,u_0)$ exists on $[t_0,\infty)$ and
 $\limsup_{t\to\infty} \|u(t,\cdot;t_0,u_0)\|<\infty$, then
$$
\limsup_{t\to\infty}\|u(t,\cdot;t_0,u_0)\|_\infty\le \frac{a_{\sup}}{b_{\inf}+\chi_2\mu_2-\chi_1\mu_1-M}.
$$
\end{lem}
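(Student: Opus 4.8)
The plan is to rewrite the equation for $u$ in non-divergence form, absorb the chemotactic contribution into the reaction term using Lemma~\ref{half-line-lm2}, and then dominate $u$ by the solution of a scalar logistic ODE. First I would expand
$-\chi_1(uv_{1,x})_x+\chi_2(uv_{2,x})_x=(\chi_2 v_{2,x}-\chi_1 v_{1,x})u_x-\chi_1 u v_{1,xx}+\chi_2 u v_{2,xx}$
and substitute the elliptic identities $v_{i,xx}=\lambda_i v_i-\mu_i u$ ($i=1,2$), so that the first equation of \eqref{half-line-eq1} becomes
\begin{equation*}
u_t=u_{xx}+(\chi_2 v_{2,x}-\chi_1 v_{1,x})\,u_x+u\Big[a(t,x)+(\chi_2\lambda_2 v_2-\chi_1\lambda_1 v_1)-\big(b(t,x)+\chi_2\mu_2-\chi_1\mu_1\big)u\Big].
\end{equation*}

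Now put $\overline C:=\limsup_{t\to\infty}\|u(t,\cdot;t_0,u_0)\|_\infty<\infty$, fix $\varepsilon>0$, and choose $T_\varepsilon>0$ with $\|u(t,\cdot;t_0,u_0)\|_\infty\le\overline C+\varepsilon$ for all $t\ge t_0+T_\varepsilon$, so that $u$ is bounded on $[t_0+T_\varepsilon,\infty)\times\RR^+$. The computation in the proof of Lemma~\ref{half-line-lm2}(1) in fact gives $(\chi_2\lambda_2 v_2-\chi_1\lambda_1 v_1)(t,x)\le M\|u(t,\cdot)\|_\infty$ pointwise in $t$; combining this with $u\ge0$, $a(t,x)\le a_{\sup}$, $b(t,x)\ge b_{\inf}$ and {\bf (H1)} (which gives $b_{\inf}+\chi_2\mu_2-\chi_1\mu_1>M\ge0$) yields, for $t\ge t_0+T_\varepsilon$ and $x\ge0$,
\begin{equation*}
u_t\le u_{xx}+(\chi_2 v_{2,x}-\chi_1 v_{1,x})\,u_x+u\big[a_{\sup}+M(\overline C+\varepsilon)-(b_{\inf}+\chi_2\mu_2-\chi_1\mu_1)u\big].
\end{equation*}

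Next I would let $\phi=\phi(t)$ solve $\phi'=\phi\big[a_{\sup}+M(\overline C+\varepsilon)-(b_{\inf}+\chi_2\mu_2-\chi_1\mu_1)\phi\big]$ with $\phi(t_0+T_\varepsilon)=\|u(t_0+T_\varepsilon,\cdot)\|_\infty+1$. Regarded as an $x$-independent function, $\phi$ solves the displayed majorant with equality, satisfies the same Neumann condition at $x=0$, and dominates $u(t_0+T_\varepsilon,\cdot)$. Since the drift coefficient $\chi_2 v_{2,x}-\chi_1 v_{1,x}$ is bounded on $[t_0+T_\varepsilon,\infty)\times\RR^+$ (using the bound $\|\p_x(\p_{xx}-\lambda_iI)^{-1}u\|_\infty\le\lambda_i^{-1/2}\|u\|_\infty$ recalled in the proof of Lemma~\ref{half-line-lm1} together with the boundedness of $u$), and both $u$ and $\phi$ are bounded, the comparison principle for bounded sub/supersolutions on the half-line gives $u(t,x;t_0,u_0)\le\phi(t)$ for all $t\ge t_0+T_\varepsilon$ and $x\ge0$. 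Because $a_{\sup}+M(\overline C+\varepsilon)\ge a_{\sup}>0$ and $b_{\inf}+\chi_2\mu_2-\chi_1\mu_1>0$, the logistic ODE forces $\phi(t)\to\frac{a_{\sup}+M(\overline C+\varepsilon)}{b_{\inf}+\chi_2\mu_2-\chi_1\mu_1}$ as $t\to\infty$, hence $\overline C\le\frac{a_{\sup}+M(\overline C+\varepsilon)}{b_{\inf}+\chi_2\mu_2-\chi_1\mu_1}$. Since {\bf (H1)} says $b_{\inf}+\chi_2\mu_2-\chi_1\mu_1-M>0$, this rearranges to $\overline C\le\frac{a_{\sup}+M\varepsilon}{b_{\inf}+\chi_2\mu_2-\chi_1\mu_1-M}$, and letting $\varepsilon\downarrow0$ gives the stated bound.

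The step I expect to require the most care is the comparison principle on the unbounded interval $[0,\infty)$: one must verify that $\chi_2 v_{2,x}-\chi_1 v_{1,x}$ is a genuine bounded coefficient and then invoke a Phragm\'en--Lindel\"of-type maximum principle valid for bounded functions, noting that $u-\phi$ satisfies a linear parabolic inequality with bounded coefficients because the logistic nonlinearity is locally Lipschitz in $u$ while $u$ and $\phi$ are uniformly bounded. The only other subtlety is the apparently circular occurrence of $\overline C$ on both sides of the penultimate estimate; this is harmless, being resolved by the final rearrangement---legitimate precisely because {\bf (H1)} makes the coefficient of $\overline C$ positive---together with the limit $\varepsilon\downarrow0$.
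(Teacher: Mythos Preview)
Your proposal is correct and follows essentially the same route as the paper: rewrite the $u$-equation in non-divergence form via the elliptic identities, use Lemma~\ref{half-line-lm2} to bound $\chi_2\lambda_2 v_2-\chi_1\lambda_1 v_1$ by $M(\overline C+\varepsilon)$ after a time $T_\varepsilon$, dominate $u$ by the scalar logistic ODE solution, and then rearrange the self-referential inequality using {\bf (H1)}. Your discussion of the comparison principle on the unbounded domain and the boundedness of the drift coefficient is actually more careful than the paper's own treatment, which simply invokes ``comparison principle for parabolic equations'' without further comment.
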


\begin{proof}
(1) For given $t_0\in\RR$ and $u_0\in C^{b}_{\rm unif}(\RR^+)$ with $u_0\geq 0 $ and $u_0\not =0$,  assume that $u(t,x;t_0,u_0)$ exists on $[t_0,\infty)$
and  $\limsup_{t\to\infty} \|u(t,\cdot;t_0,u_0)<\infty$.
Let
$$
%\underline{u}=\liminf_{t\to\infty}\inf_{x\in\R^N}u(x,t;t_0,u_0)\quad \text{and}\quad
\overline{u}=\limsup_{t\to\infty}\sup_{x\in\R^N}u(x,t;t_0,u_0).
$$
By the assumption,  $\bar u<\infty$.
Then  for every $\varepsilon>0$, there is $T_{\varepsilon}>0$ such that
$$
 u(t+t_0,x;t_0,u_0)\leq \overline{u}+\varepsilon\quad \forall\ x\in\RR^+,\ \forall\ t\geq T_{\varepsilon}.
$$
Hence, it follows from comparison principle for  elliptic equations, that
\begin{equation}\label{aux-asym-eq1}
\lambda_iv_i(t+t_0,x;t_0,u_0)\leq \mu_i(\overline{u}+\varepsilon), \forall\ x\in\RR^+,\ \ \forall \ t\ge T_{\varepsilon},\ i=1,2.
\end{equation}

By similar arguments as those in Lemma \ref{half-line-lm2}, we have
\begin{align*}
(\chi_2 \lambda_2 v_2-\chi_1 \lambda_1 v_1)(t,x;t_0,u_0)\le  \frac{\bar u+\varepsilon}{\lambda_2}\Big((\chi_2\lambda_2\mu_2-\chi_1\lambda_1\mu_1)_{+}+\chi_1\mu_1(\lambda_1-\lambda_2)_{+}\Big)
\end{align*}
and
$$
 (\chi_2 \lambda_2 v_2-\chi_1 \lambda_1 v_1)(t,x;t_0,u_0)\le \frac{\bar u+\varepsilon}{\lambda_{1}}\Big( \chi_2\mu_2(\lambda_1-\lambda_2)_{+} + (\chi_2\mu_2\lambda_2-\chi_1\mu_1\lambda_1)_{+} \Big)
$$
for $t\ge t_0+T_\varepsilon$.
This implies that
$$
(\chi_2 \lambda_2 v_2-\chi_1 \lambda_1 v_1)(t,x;t_0,u_0)\le M (\bar u+\varepsilon)
$$
for $t\ge t_0+T_\varepsilon$ and then
\begin{eqnarray}
\label{aux-asym-eq1-1}
u_t\le  u_{xx}+(\chi_2v_2-\chi_1v_1)_x  u_x + (a_{\sup}+M (\overline{u}+\varepsilon) )u- (b_{\inf}+\chi_2\mu_2-\chi_1\mu_1)u)u
\end{eqnarray}
for $t\ge t_0+T_\varepsilon$.

By \eqref{aux-asym-eq1-1} and comparison principle for parabolic equations,
$$
u(t,x;t_0,u_0)\le U_\varepsilon(t)\quad \forall\,\, t\ge t_0+T_\varepsilon,\,\, x\in\RR^+,
$$
where $U_\varepsilon(t)$ is the solution of
$$
\begin{cases}
U^{'}= (a_{\sup}+M (\overline{u}+\varepsilon) )U- (b_{\inf}+\chi_2\mu_2-\chi_1\mu_1)U)U\cr
U(t_0+T_\varepsilon)=\|u(t_0+T_\varepsilon,\cdot;t_0,u_0)\|_\infty.
\end{cases}
$$
Note that
$$
\lim_{t\to\infty} U_\varepsilon(t)=\frac{a_{\sup}+M(\bar u+\varepsilon)}{b_{\inf}+\chi_2\mu_2-\chi_1\mu_1}.
$$
It then follows that
$$
\bar u=\limsup_{t\to\infty}\|u(t,\cdot;t_0,u_0)\|_\infty\le \frac{a_{\sup}+M(\bar u+\varepsilon)}{b_{\inf}+\chi_2\mu_2-\chi_1\mu_1}
$$
and then
$$
\bar u\le \frac{a_{\sup}}{b_{\inf}+\chi_2\mu_2-\chi_1\mu_1-M}.
$$
The lemma is thus proved.
\end{proof}

Before we state the next lemma,
  let $a_0=\frac{a_{\inf}}{3}$  and
 $L>0$ be a given constant.
 Consider
\begin{equation}
\label{aux-eq1-1}
\begin{cases}
u_t=u_{xx}+a_0 u,\quad x\in (-L,L)\cr
u(t,-L)=u(t,L)=0,
\end{cases}
\end{equation}
and its associated eigenvalue problem
\begin{equation}
\label{aux-eq1-2}
\begin{cases}
u_{xx}+a_0 u=\sigma u,\quad x\in (-L,L)\cr
u(t,-L)=u(t,L)=0.
\end{cases}
\end{equation}
Let $\sigma_{L}$ be the principal eigenvalue of \eqref{aux-eq1-2} and $\phi_L(x)$ be its principal eigenfunction
with $\phi_L(0)=1$. Note that
$$
\sigma_L=-\frac{\pi^2}{4L^2}+a_0
$$
and
$$\phi_L(x)=\cos(\frac{\pi}{2L}x)
\quad {\rm and}\quad 0<\phi_L(x)\le \phi_L(0),\quad \forall x\in (-L,L).
$$
Note also that $u(t,x)=e^{\sigma_{_{L}} t}\phi_L(x)$ is a solution of \eqref{aux-eq1-1}.
Let $u(t,x;u_0)$ be the solution of \eqref{aux-eq1-1} with $u_0\in C([-L,L])$.
Then
\begin{equation}
\label{aux-eq1-3}
u(t,x;\kappa \phi_L)=\kappa e^{\sigma_{_{L}} t}\phi_L(x)
\end{equation}
for all $\kappa\in\R$. Moreover, we have that $\phi_L(x)$ satisfies
\begin{equation}
\label{aux-eq1-2-1}
\begin{cases}
u_{xx}+a_0 u=\sigma_{L} u,\quad x\in (0,L)\cr
u_x(t,0)=u(t,L)=0,
\end{cases}
\end{equation}
and  \eqref{aux-eq1-3}  also holds when $u(t,x; \kappa \phi_L)$ is the solution of
\begin{equation}
\label{aux-eq1-2-1}
\begin{cases}
u_t=u_{xx}+a_0 u,\quad x\in (0,L)\cr
u_x(t,0)=u(t,L)=0
\end{cases}
\end{equation}
with  $u(0,x; \kappa \phi_L)=\kappa\phi_L(x)$ for $x\in [0,L]$.

In the following, fix $T_0>0$ and let $L_0\gg 0$ be such that
$\sigma_{_{L_0}}>0$. Note that $\sigma_{L}$ is increasing as $L$ increases. Choose $N_0\in\NN$ and $\alpha_1>1$ such that
$$
\min\{e^{\sigma_{L_0}T_0}, e^{ \sigma_{N_0L_0} T_0}\cos\big(\frac {\pi}{2N_0}\big)\}\ge \alpha_1.
$$
Then
\begin{equation}
\label{nnew-aux-eq1}
\begin{cases}
u(T_0,0;\kappa \phi_L)=\kappa e^{\sigma_L T_0}\ge \alpha_1 \kappa\cr
u(T_0,x;\kappa \phi_{N_0L})=\kappa e^{\sigma_{N_0L} T_0}\phi_{N_0L}(x)\ge \alpha_1 \kappa\quad {\rm for}\quad |x|\le L
\end{cases}
\end{equation}
for any $L\ge L_0$.

\medskip

\begin{lem}
\label{half-line-lm4}
Consider \eqref{half-line-eq1} and assume {\bf (H1)}.
There is $0<\delta_0^*<M^+=\frac{a_{\sup}}{b_{\inf}+\chi_2\mu_2-\chi_1\mu_1-M}+1$ such that for any $0< \delta\le \delta_0^*$
 and for any $u_0\in C_{\rm unif}^b(\RR^+)$ with
$\delta\le u_0\le M^+$,
\begin{equation}
\label{aux-eq5-1}
\delta\le u(t_0+T_0,x;t_0,u_0)\le M^+\quad \forall\,\, x\in\RR^+, \ \forall\ t_0\in\RR.
\end{equation}
\end{lem}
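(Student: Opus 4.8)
The plan is to establish the two inequalities in \eqref{aux-eq5-1} separately, the upper bound by a global (in $x$) ODE comparison and the lower bound by a local comparison on a large but finite interval, exploiting that $u_0$ is bounded below by a positive constant on all of $\RR^+$.

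\textbf{Upper bound.} Suppose $\delta \le u_0 \le M^+$. Since $M^+ \ge C(u_0)$ need not hold in general, I instead argue directly. By the comparison principle for the elliptic equations satisfied by $v_1,v_2$, if $0\le u(t,\cdot)\le M^+$ on $[t_0,t_0+T_0]$ (which holds at least on a short interval by continuity and can be propagated), then arguing exactly as in Lemma \ref{half-line-lm2}(1) with $C_0$ replaced by $M^+$ we get $(\chi_2\lambda_2 v_2 - \chi_1\lambda_1 v_1)(t,x) \le M\,M^+$. Substituting the equation $0 = v_{i,xx} - \lambda_i v_i + \mu_i u$ into the $u$-equation gives
$$
u_t \le u_{xx} + (\chi_2 v_{2,x}-\chi_1 v_{1,x})u_x + \big(a_{\sup} + \chi_2\lambda_2 v_2 - \chi_1\lambda_1 v_1\big)u - (b_{\inf}+\chi_2\mu_2-\chi_1\mu_1)u^2,
$$
hence $u_t \le u_{xx} + (\chi_2 v_{2,x}-\chi_1 v_{1,x})u_x + (a_{\sup}+M M^+)u - (b_{\inf}+\chi_2\mu_2-\chi_1\mu_1)u^2$ wherever $u \le M^+$. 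The constant function $M^+$ is a supersolution of this scalar equation provided $(a_{\sup}+M M^+) \le (b_{\inf}+\chi_2\mu_2-\chi_1\mu_1)M^+$, i.e. $M^+ \ge \frac{a_{\sup}}{b_{\inf}+\chi_2\mu_2-\chi_1\mu_1-M}$, which holds by the definition $M^+ = \frac{a_{\sup}}{b_{\inf}+\chi_2\mu_2-\chi_1\mu_1-M}+1$ (here \textbf{(H1)} guarantees the denominator is positive). A standard continuation argument then shows $u(t,x;t_0,u_0)\le M^+$ for all $t\in[t_0,t_0+T_0]$, $x\in\RR^+$.

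\textbf{Lower bound.} Fix $L\ge L_0$ (to be chosen) and work on $[0,L]$ with the reflected/Neumann setup of \eqref{aux-eq1-2-1}. Since $u$ stays in $[0,M^+]$ by the step above, and since $v_1$ satisfies $\lambda_1 v_1 \le \mu_1 M^+$ by comparison, the reaction term obeys $(a(t,x)+\chi_2\lambda_2 v_2 - \chi_1\lambda_1 v_1 - (b_{\inf}+\chi_2\mu_2-\chi_1\mu_1)u) \ge a_{\inf} - \chi_1\mu_1 M^+ - (b_{\inf}+\chi_2\mu_2-\chi_1\mu_1)M^+$; this lower bound is not yet useful, so instead I restrict to the regime where $u$ is small: on the set where $u \le \delta$, the reaction coefficient is at least $a_{\inf} - C\delta$ for an explicit constant $C$ (using $\chi_2\lambda_2 v_2 \ge 0$ and $\chi_1\lambda_1 v_1 \le \chi_1\mu_1 \delta$ via comparison when $u\le\delta$). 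Choosing $\delta_0^*$ small enough that $a_{\inf} - C\delta_0^* \ge 2a_0 = \tfrac{2a_{\inf}}{3} > a_0$, we obtain, as long as $u \le \delta$,
$$
u_t \ge u_{xx} + (\chi_2 v_{2,x}-\chi_1 v_{1,x})u_x + a_0 u \quad \text{on } (0,L).
$$
Now compare with the linear problem \eqref{aux-eq1-2-1}: the function $\underline u(t,x) = \delta\,\phi_L(x)\,e^{\sigma_L(t-t_0)}$ (extended by the Neumann reflection) is a subsolution of the linear equation, but the drift term $(\chi_2 v_{2,x}-\chi_1 v_{1,x})u_x$ is a genuine obstacle — it is not present in \eqref{aux-eq1-1}. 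I expect the drift coefficient is bounded uniformly (by Lemma \ref{half-line-lm2}-type estimates on $\|v_{i,x}\|_\infty \le \frac{\mu_i}{\sqrt{\lambda_i}}\|u\|_\infty \le \frac{\mu_i}{\sqrt{\lambda_i}}M^+$), so one must use a comparison with a linear parabolic operator \emph{with} a bounded drift. The clean way: absorb the drift by noting that for the \emph{scalar linear} parabolic operator $\partial_t - \partial_{xx} - \beta(t,x)\partial_x - a_0$ on $(-L,L)$ with Dirichlet boundary and bounded $\beta$, the principal eigenvalue is still bounded below (it exceeds $-\frac{\pi^2}{4L^2} - \frac{\|\beta\|_\infty^2}{4} + a_0$ by completing the square / the standard drift-transformation $w = e^{-\int \beta/2}u$); so by enlarging $L$ (equivalently replacing $L_0$ by a larger $L_0$ depending only on the uniform drift bound, which depends only on the data and $M^+$, not on $u_0$) we still get a positive principal eigenvalue with a positive principal eigenfunction bounded below on $[-L/2,L/2]$ say. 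Then the subsolution $\underline u(t,x) = \delta\,\psi_L(x)e^{\tilde\sigma_L(t-t_0)}$ with $\tilde\sigma_L>0$ satisfies $\underline u(t_0,\cdot)\le \delta \le u_0$ (after normalizing $\psi_L\le 1$) and stays $\le u(t,\cdot)$ by the comparison principle \emph{as long as} $u \le \delta$; but $\underline u(t_0+T_0,0) = \delta\,\psi_L(0)e^{\tilde\sigma_L T_0} \ge \delta$ once $T_0$ is large enough relative to $\tilde\sigma_L$ (or one iterates over subintervals of length $T_0$, as in \eqref{nnew-aux-eq1}). A small subtlety: the comparison only runs while $u\le\delta$; but if $u$ exceeds $\delta$ somewhere before time $t_0+T_0$ the conclusion $u(t_0+T_0,x)\ge\delta$ still requires an argument at that point — handle this by the usual dichotomy (either $u(t,x_0)\ge\delta$ throughout, in which case done, or the first time/place it drops to $\delta$ one invokes the strong maximum principle / the subsolution bound locally).

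\textbf{Main obstacle.} The genuine difficulty is the first-order drift term $(\chi_2 v_{2,x}-\chi_1 v_{1,x})\partial_x$, which is absent from the model linear problem \eqref{aux-eq1-1}--\eqref{aux-eq1-3} used to calibrate $L_0,N_0,\alpha_1$ in \eqref{nnew-aux-eq1}. One must either (a) re-derive the eigenvalue estimate \eqref{nnew-aux-eq1} for the operator with bounded drift — straightforward via the gauge transformation $u\mapsto e^{-\frac12\int^x\beta}u$, which shifts the principal eigenvalue by at most $\frac14\|\beta\|_\infty^2$ and keeps the principal eigenfunction positive — and then choose $L_0$ (hence $N_0$) large enough to overcome both $\frac{\pi^2}{4L_0^2}$ and $\frac14\|\beta\|_\infty^2$, noting $\|\beta\|_\infty \le \frac{\chi_1\mu_1}{\sqrt{\lambda_1}}M^+ + \frac{\chi_2\mu_2}{\sqrt{\lambda_2}}M^+$ depends only on the data; or (b) avoid the drift entirely by a change of dependent variable in the full system. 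Once the drift is controlled, the remaining steps are routine applications of the parabolic and elliptic comparison principles, and $\delta_0^*$ is then fixed by the two smallness requirements: $\delta_0^*\le M^+$ and $a_{\inf}-C\delta_0^*\ge 2a_0$, with $C$ an explicit constant built from $\chi_i,\mu_i,\lambda_i,b_{\inf}$.
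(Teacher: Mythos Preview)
Your upper bound argument is fine and matches the paper's implicit reasoning.

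The lower bound argument has a genuine gap, and it is precisely at the point you yourself flag as the ``main obstacle.'' Your proposed fix (a) does not work: the gauge transformation shifts the principal eigenvalue of the Dirichlet problem on $(-L,L)$ by (roughly) $-\tfrac14\|\beta\|_\infty^2$, a quantity that is \emph{independent of $L$}. Enlarging $L$ kills the $\pi^2/(4L^2)$ term but does nothing to $\tfrac14\|\beta\|_\infty^2$. Since $\|\beta\|_\infty \le \big(\tfrac{\chi_1\mu_1}{\sqrt{\lambda_1}}+\tfrac{\chi_2\mu_2}{\sqrt{\lambda_2}}\big)M^+$ is a fixed constant of the data, there is no reason whatsoever to expect $a_0=a_{\inf}/3$ to dominate it; under {\bf (H1)} alone this inequality can easily fail. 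A related error occurs earlier in your lower-bound step: the claim ``$\chi_1\lambda_1 v_1\le \chi_1\mu_1\delta$ via comparison when $u\le\delta$'' is false as stated, because $v_1(t,x)$ is a \emph{nonlocal} functional of $u(t,\cdot)$. Even if $u(t,x_0)\le\delta$ at some point, $v_1(t,x_0)$ may be of order $M^+$ if $u(t,\cdot)$ is large elsewhere. So neither the reaction nor the drift is small just because $u$ is small at a point.

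The paper's proof overcomes exactly this difficulty, and the mechanism is worth internalizing. The key observation (the paper's Step~3) is that the Green's function of $-\partial_{xx}+\lambda_i$ decays exponentially, so if $u$ is small on a \emph{sufficiently large spatial interval} $\{|x-x_0|\le 3N_0L\}$, then $v_i$ and $v_{i,x}$ are small on the smaller interval $\{|x-x_0|\le N_0L\}$, regardless of how large $u$ is outside. This makes the drift $\epsilon$-small \emph{locally}, and then the eigenvalue estimate \eqref{nnew-aux-eq1} survives as a small perturbation (Step~1). But one cannot assume $u_0$ is small on a large interval: $u_0$ may equal $M^+$ right next to the bad point $x_n$. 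The paper therefore argues by contradiction with sequences $\delta_n\to 0$, $x_n$, $u_{0n}$, and runs a dichotomy on the measure of the set $D_{0n}=\{|x-x_n|<3N_0L:\ u_{0n}>\delta_0/2\}$: if $|D_{0n}|\to 0$, one replaces $u_{0n}$ by a nearby $\tilde u_{0n}$ that \emph{is} uniformly small on the big interval (so Step~3 applies), and shows via an $L^p$ stability estimate that this modification changes $v_i,\,v_{i,x}$ negligibly; if $|D_{0n}|$ stays bounded below, one uses parabolic smoothing and compactness to pass to a limit solution that is strictly positive at $(T_0,0)$ by the strong maximum principle, contradicting $u(t_{0n}+T_0,x_n)<\delta_n\to 0$. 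Neither branch is available in your direct-comparison scheme.
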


\begin{proof}
It can be proved by applying properly modified arguments  in \cite[Lemma 3.5]{Salako2018parabolic1}. But the modification is
not trivial.  For the reader's
convenience, we provide some outline of the proof in the following.

First, choose $\alpha_2$ and $\alpha_3$ such that $\alpha_1>\alpha_2>\alpha_3>1$. Consider
\begin{equation}
\label{aux-eq2-1}
\begin{cases}
u_t=u_{xx}+b_\epsilon(x,t) u_x+a_0  u,\quad x\in (-L,L)\cr
u(t,-L)=u(t,L)=0
\end{cases}
\end{equation}
and
\begin{equation}
\label{aux-eq2-1-1}
\begin{cases}
u_t=u_{xx}+b_\epsilon(t,x) u_x+a_0  u,\quad x\in (0,N_0L)\cr
u_x(t,0)=u(t,N_0L)=0,
\end{cases}
\end{equation}
where $|b_\epsilon(x,t)|<\epsilon$  and
$t_0\le t\le t_0+T_0$.
Let $u_{b_\epsilon,L}(x,t
;t_0,u_0)$ be the solution of \eqref{aux-eq2-1}  (resp.  \eqref{aux-eq2-1-1}) with $u_{b_\epsilon,L}(x,t_0;t_0,u_0)=u_0(x)$.
By the similar arguments as those in Step 1 of \cite[Lemma 3.5]{Salako2018parabolic1},  it can be proved that
there is $\epsilon_0>0$ such that for any $L\ge L_0$, $\kappa>0$, and $0\le \epsilon\le \epsilon_0$,
\begin{equation}
\label{aux-eq2-2}
u_{b_\epsilon,L}(t_0+T_0,0;t_0,\kappa \phi_L)\geq \alpha_2\kappa\,\,  ({\rm resp.}\,\,u_{b_\epsilon,L}(t_0+T_0,x;t_0,\kappa \phi_{N_0L})\geq \alpha_2\kappa ,\, {\rm for}\,\,  0\le x\le L)
\end{equation}
provided that $|b_\epsilon(t,x)|<\epsilon$ for $x\in [-L,L]$ (resp. $x\in [0,N_0L])$; and for any
$L\ge L_0$ and $0\le \epsilon\le \epsilon_0$,
\begin{equation}
\label{aux-eq2-2-0}
\begin{cases}
0\le u_{b_\epsilon,L}(t+t_0,x;t_0,\kappa \phi_L)\le e^{a_0 t}\kappa \quad \forall \,\, 0\le t\le T_0,\,\, x\in [-L,L] \cr
 ({\rm resp.} \,  0\le u(t+t_0,x;t_0,\kappa \phi_{N_0L} \le e^{a_0 t} \quad \forall\,\, 0\le t\le T_0, \,\, x\in [0,N_0L]).
 \end{cases}
\end{equation}

Second, consider
\begin{equation}
\label{aux-eq3-1}
\begin{cases}
u_t=u_{xx}+b_\epsilon(t,x) u_x+ u(2 a_0-c(t,x) u),\quad x\in (-L,L)\cr
u(t,-L)=u(t,L)=0,
\end{cases}
\end{equation}
and
\begin{equation}
\label{aux-eq3-1-1}
\begin{cases}
u_t=u_{xx}+b_\epsilon(t,x) u_x+ u(2 a_0-c(t,x) u),\quad x\in (0,N_0L)\cr
u_x(t,0)=u(t,N_0L)=0,
\end{cases}
\end{equation}
where $0\le c(t,x)\le b_{\sup}+\chi_2\mu_2$. Let $u_\epsilon(t,x;t_0,u_0)$ be the solution of \eqref{aux-eq3-1}
(resp. \eqref{aux-eq3-1-1})  with
$u_{\varepsilon}(t_0,x;t_0,u_0)=u_0(x)$.
Assume $L\ge L_0$ and $0\le\epsilon\le\epsilon_0$.
By the similar arguments as those in Step 2 of \cite[Lemma 3.5]{Salako2018parabolic1}, it can be proved that there is $\kappa_0>0$ such that
\begin{equation}
\label{aux-eq3-2}
u_{\varepsilon}(t_0+T_0,0;t_0,\kappa\phi_L)\ge \alpha_3\kappa \,\,\,  {\rm (resp.}\,\,
 u_{\varepsilon}(t_0+T_0,0;t_0,\kappa\phi_{N_0}L)\ge \alpha_3\kappa\,\, {\rm for}\,\, 0\le x\le L)
\end{equation}
for all $0<\kappa \le \kappa_0$.

\smallskip

Third, assume that  $(u(t,x;t_0,u_0), v_1(t,x;t_0,u_0),v_2(t,x;t_0,u_0))$ is the solution
of \eqref{half-line-eq1} on $[t_0,t_0+T_0]$ with $u(t_0,\cdot;t_0,u_0)=u_0(\cdot)\in C_{\rm unif}^b(\RR^+)$.
By the similar arguments as those in Step 3 of \cite[Lemma 3.5]{Salako2018parabolic1},
it can be proved that there is $0<\delta_0\le \kappa_0$ such that for any $u_0\in C_{\rm unif}^b(\RR^+)$ and $x_0\in\RR^+$  with
$0\leq u_0\leq M^+$ and $u_0(x)<\delta_0$ for $x\in\RR^+$, $|x-x_0|\le 3N_0L$, there holds
\begin{equation}
\label{aux-eq4-2}
\begin{cases}
0\le \lambda_1  v_1(t,x;t_0,u_0)\le \frac{a_0}{4\chi_1}\cr
0\le \lambda_2  v_2(t,x;t_0,u_0)\le \frac{a_0}{4\chi_2}\cr
 | \nabla v_1(t,x;t_0,u_0)|<\frac{\epsilon_0}{4 \chi_1} \cr
  | \nabla v_2(t,x;t_0,u_0)|<\frac{\epsilon_0}{4 \chi_2}
  \end{cases}
\end{equation}
for $t_0\le t\le t_0+T_0$, $x\in\RR^+$ with $|x-x_0|\le N_0L$
provided that $L\gg 1$.

\smallskip

Fourth, we claim that  there is $0< \delta_0^*< \min\{\delta_0,M^+\}$ such that for any $0<\delta\le  \delta_0^*$
 and for any $u_0\in C_{\rm unif}^b(\RR^+)$ with
$\delta\le u_0\le M^+$,
\begin{equation}
\label{aux-eq5-1}
\delta\le u(t_0+T_0,x;t_0,u_0)\le M^+\quad \forall\,\, x\in\RR^+.
\end{equation}

Assume that the above claim  does not hold. Then there are $\delta_n\to 0$, $t_{0n}\in\R$, $u_{0n}\in C_{\rm unif}^b(\RR^+)$ with
$\delta_n\le u_{0n}\le M^+$, and $x_n\in\RR^+$ such that
\begin{equation}
\label{aux-eq5-2}
u(t_{0n}+T_0,x_n;t_{0n},u_{0n})<\delta_n.
\end{equation}
For fixed $L\gg 1$, let
$$
D_{0n}=\{x\in\RR^+\,|\, |x-x_n|< 3N_0L,\,\, u_{0n}(x)> \frac{\delta_0}{2}\}.
$$
Without loss of generality, we may assume that $\lim_{n\to\infty} |D_{0n}|$ exists,
where $|D_{0n}|$ is the Lebesgue measure of $D_{0n}$.

Assume that $\lim_{n\to\infty} |D_{0n}|=0$.
Let $\{\tilde{u}_{0n}\}_{n\geq 1}$ be a sequence of elements of   $ C^{b}_{\rm unif}(\RR^+)$ satisfying
$$
\begin{cases}
\delta_n\leq \tilde{u}_{0n}(x)\leq \frac{\delta_{0}}{2}, \quad x\in\RR^+,   |x-x_n|\le 3N_0L\ \  \text{and}\cr
\|\tilde{u}_{0n}(\cdot)-u_{0n}(\cdot)\|_{L^{p}(\RR^+)}\to 0, \quad \forall p>1.
\end{cases}
$$
Let $w_{n}(x,t):=u(t+t_{0n},x;t_{0n},u_{0n}(\cdot))-u(t+t_{0n},x;t_{0n},\tilde{u}_{0n})$ and $v_{i, n}(x,t):=v_i(t+t_{0n},x;t_{0n},u_{0n}(\cdot))-v_i(t+t_{0n},x;t_{0n},\tilde{u}_{0n})$, $i=1,2$.
By the similar arguments as those in Step 4 of \cite[Lemma 3.5]{Salako2018parabolic1},
it can be proved that
\begin{equation}\label{aux-eq5-5}
\lim_{n\to\infty}\sup_{t_{0n}\leq t\leq t_{0n}+T_0}\|w_{n}(t,\cdot)\|_{L^{p}(\RR^+)}=0
\end{equation}
and
\begin{equation}\label{aux-eq5-6}
\lim_{n\to\infty}\sup_{t_{0n}\leq t\leq t_{0n}+ T_0}\|v_{i,n}(t,\cdot)\|_{C^{1,b}_{\rm unif}(\RR^+)}=0,\quad i=1,2.
\end{equation}
By \eqref{aux-eq4-2},  for every $n\geq 1,$
$$
\begin{cases}
0\leq\lambda_1 v_1(t+t_{0n},x;t_{0n},\tilde u_{0n})\leq \frac{a_0}{4\chi_1}\cr
0\leq\lambda_2 v_2(t+t_{0n},x;t_{0n},\tilde u_{0n})\leq \frac{a_0}{4\chi_2}\cr
 |\chi_1 v_{1,x}(t+t_{0n},x;t_{0n},\tilde u_{0n})|\leq \frac{\varepsilon_0}{4}\cr
|\chi_2 v_{2,x}(t+t_{0n},x;t_{0n},\tilde u_{0n})|\leq \frac{\varepsilon_0}{4}\\
\end{cases}
$$
for all $0\leq t\leq T_0$ and $x\in\RR^+$ with $|x-x_n|\le N_0L$.
This together with  \eqref{aux-eq5-6} implies that, for $n\gg 1$, there holds
$$
\begin{cases}
0\leq \chi_1\lambda_1 v_1(t+t_{0n},x;t_{0n}, u_{0n})\leq \frac{a_0}{2}\cr
0\leq \chi_2\lambda_2 v_2(t+t_{0n},x;t_{0n}, u_{0n})\leq \frac{a_0}{2}\cr
 |\chi_1 v_{1,x} (t+t_{0n},x;t_{0n}, u_{0n}(\cdot))|\leq \frac{\varepsilon_0}{2}\cr
  |\chi_2 v_{2,x} (t+t_{0n},x;t_{0n}, u_{0n}(\cdot))|\leq \frac{\varepsilon_0}{2}
\end{cases}
$$
for all $0\leq t\leq  T_0$ and $x\in\RR^+$ with $|x-x_{n}|\le N_0L$.
Hence
$$
\begin{cases}
|\chi_1 v_{1,x}(t+t_{0n},x;t_{0n},u_{0n})-\chi_2 v_{2,x}(t+t_{0n},x;t_{0n},u_{0n})|\le \varepsilon_0\cr
|\chi_1\lambda_1 v_1(t+t_{0n},x;t_{0n},u_{0n})-\chi_2\lambda_2 v_2(t+t_{0n},x;t_{n},u_{0n})\le a_0
\end{cases}
$$
for all $0\le t\le T_0$ and $x\in\RR^+$ with $|x-x_{n}|\le N_0L$.

Let
$$b_n(t,x)=-\chi_1 v_{1,x}(t+t_{0n},x;t_{0n},u_{0n})+\chi_2 v_{2,x}(t+t_{0n},x;t_{0n},u_{0n})
$$
and
$$
u_n(t,x)=u(t+t_{0n},x+x_n;t_{0n},u_{0n}).
$$
In the case $x_{0n}>L$, $u_n(t,x)$ satisfies
$$
\begin{cases}
u_t\ge u_{xx}+b_n(t,x) u_x+u(2a_0-(b_{\sup}+\chi_2\mu_2)u),\quad -L<x<L\cr
u(t,-L)>0, \,\, u(t,L)>0\cr
u(t_{0n},x)\ge \delta_n,\quad x\in [-L,L].
\end{cases}
$$
In the case $x_n\le L$, $u_n(t,x)$ satisfies
$$
\begin{cases}
u_t\ge u_{xx}+b_n(t,x) u_x+u(2a_0-(b_{\sup}+\chi_2\mu_2)u),\quad -x_n<x<N_0L\cr
u_x(t,-x_n)=0, \,\, u(t,L)>0\cr
u(t_{0n},x)\ge \delta_n,\quad x\in [-x_n,N_0L].
\end{cases}
$$
In either case,  it follows from the arguments of \eqref{aux-eq3-2} that
$$u(T+t_{0n},x_n;t_{0n}, u_{0n})=u_n(T_0,0) >\delta_{n},
$$
which is a contradictions. Hence $\lim_{n\to\infty} |D_{0n}|\not =0$.

Without loss of generality, we may then assume that $\inf_{n\ge 1} |D_{0n}|>0$ and there is $L>0$ such that
$$
\inf_{n\ge 1}|\{x\in\RR^+\,|\, x\in D_{0n}\cap [x_n-3N_0L,x_n+3N_0L]\}|>0.
$$
By the similar arguments as those in Step 4 of \cite[Lemma 3.5]{Salako2018parabolic1}, it can be proved that there is $0<\tilde T_0<T_0$ such that
$$
\inf_{n\ge 1}\|u(t_{0n}+\tilde T_0,\cdot;t_{0n},u_{0n})\|_{C([x_n-3N_0L,x_n+3N_0L]\cap [0,x_n+3N_0L])}>0.
$$
Moreover, we  might suppose that $x_n\to x^*\in [0,\infty]$ and
$u( \tilde T_0+t_{0n}, x_n+\cdot ;t_{0n},u_{0n}(\cdot+x_n))\to u^{*}_{0}(\cdot)$ locally uniformly on $(-x^*,\infty)$ and $\|u^{*}_{0}\|_{C^0([-3N_0L,3N_0L]\cap [-x^*,3N_0L])}>0$. Also, we might assume that
$(u(t+t_{0n},x_n+\cdot;t_{0n},u_{0n}),v_1( t+t_{0n},x_n+\cdot;t_{0n},u_{0n}),v_2(t+t_{0n},\cdot+x_n;t_{0n},u_{0n}))\to (u^{*}(t,x),v_1^{*}(t,x),v_2^*(t,x))$ locally uniformly on $[\tilde T_0,\infty)\times (-x^*,\infty)$, $a(t,x+x_{n})\to a^{*}(t,x)$, and  $b(t,x+x_{n})\to b^{*}(t,x)$, where $(u^{*},v_1^{*},v_2^{*})$ satisfies
\begin{equation*}
\begin{cases}
u^{*}_{t}=u^{*}_{xx}-\chi_1(u^{*} v^{*}_{1,x})_x+\chi_2 (u^* v^*_{2,x})_x+(a^{*}(t,x)-b^{*}(t,x)u^{*})u^{*},\quad -\infty<x<\infty\cr
0=v^{*}_{1,xx}-\lambda_1 v_1^*+\mu_1 u^{*},\quad -\infty<x<\infty \cr
0=v^{*}_{2,xx}-\lambda_2 v_2^*+\mu_2 u^{*},\quad -\infty<x<\infty \cr
u^*(\tilde T_0,\cdot)=u^{*}_{0}
\end{cases}
\end{equation*}
in the case $x^*=\infty$,  and satisfies
\begin{equation*}
\begin{cases}
u^{*}_{t}=u^{*}_{xx}-\chi_1(u^{*} v^{*}_{1,x})_x+\chi_2 (u^* v^*_{2,x})_x+(a^{*}(t,x)-b^{*}(t,x)u^{*})u^{*},\quad - x^*<x<\infty\cr
0=v^{*}_{1,xx}-\lambda_1 v_1^*+\mu_1 u^{*},\quad -x^*<x<\infty \cr
0=v^{*}_{2,xx}-\lambda_2 v_2^*+\mu_2 u^{*},\quad - x^*<x<\infty \cr
u^*_x(t,-x^*)=v_{1,x}^*(t,-x^*)=v_{2,x}^*(t,-x^*)=0\cr
u^*(\tilde T_0,\cdot)=u^{*}_{0}
\end{cases}
\end{equation*}
in the case $x^*<\infty$.
Since $\|u^*_0\|_{\infty}>0$ and $u^*(t,x)\geq 0$, it follows from comparison principle for parabolic equations that $u^{*}(t,x)>0$ for every $x\in (-x^*,\infty)$ and $t\in (\tilde T_0, \infty)$. In particular $u^{*}(T_0,0)>0$. Note by \eqref{aux-eq5-2} that we must have $u^{*}(T_0,0)=0$, which is a contradiction. Hence the claim \eqref{aux-eq5-1} holds. The lemma is thus proved.
\end{proof}

\section{Proofs of the main results}

In this section, we prove Theorems \ref{half-line-thm1}-\ref{half-line-thm3} and Theorem \ref{whole-line-thm}. We mainly provide the proof for Theorems \ref{half-line-thm1}-\ref{half-line-thm3}. Theorem \ref{whole-line-thm} can be proved by the similar arguments of Theorems \ref{half-line-thm1}-\ref{half-line-thm3}.

\subsection{Global existence}

In this subsection, we prove Theorem \ref{half-line-thm1} for the global existence of solutions of \eqref{half-line-eq1} with nonnegative initial functions.

\begin{proof}[Proof of Theorem \ref{half-line-thm1}]
By Lemma \ref{half-line-lm1},
for any $t_0\in\RR$ and any nonnegative function $u_0\in C^{b}_{\rm unif}(\RR^+)$, \eqref{half-line-eq1} has a unique solution
    $(u(t,x;t_0,u_0),v_1(t,x;t_0,u_0)$, $v_2(t,x;t_0,u_0))$ with $u(t_0,x$; $t_0,u_0)=u_0(x)$ defined on $[t_0, t_0+T_{\max})$.
Moreover, if $T_{\max}<\infty$, then
$$
\limsup_{t\to T_{\max}} \|u(t_0+t,\cdot;t_0,u_0)\|_\infty=\infty.
$$

Let $C_0=C_0(u_0)$ be as in Lemma \ref{half-line-lm2}.  For any give $0<T<T_{\max}$, let
 $$\mathcal{E}^{T}=C^{b}_{\rm unif}([0,T]\times \RR^+)$$
  endowed with the norm
\begin{equation}\label{global-exist-eq-r001}
\|u\|_{\mathcal{E}^T}:=\sum_{k=1}^{\infty}\frac{1}{2^k}\|u\|_{L^{\infty}([0,T]\times [0,k])}.
\end{equation}
 Consider the subset $\mathcal{E}$ of $\mathcal{E}^T$ defined by
 $$\mathcal{E}:=\{u\in  C_{\rm unif}^b([0,T]\times \RR^+)\,|\, u(0,\cdot)=u_0, 0\leq u(x,t)\leq C_0, x\in\RR^+, 0\leq t\leq T\}.
  $$
It is clear that
\begin{equation}\label{global-exist-eq-r002}
\|u\|_{\mathcal{E}^T}\leq C_{0},  \quad \forall\ u\in\mathcal{E}.
\end{equation}
Moreover,  $\mathcal{E}$ is a closed bounded and convex subset of $\mathcal{E}^T$.  We shall show that $u(t_0+\cdot,\cdot;t_0,u_0)\in \mathcal{E}$.

To this end, for any given $u\in \mathcal{E}$, let $v_i(t,x;u)$ be the solution of
$$
\begin{cases}
0=v_{i,xx}-\lambda_i v+\mu_i u(t,x),\quad x\in\RR^+\cr
\frac{\p v_i}{\p n}(t,0)=0.
\end{cases}
$$
Let  $U(x,t;u)$ be the solution of the initial value problem
\begin{equation}\label{global-exist-eq004}
\begin{cases}
U_{t}=\Delta U+\nabla\big(\chi_2 v_2(t,x;u)-\chi_1v_1(t,x;u)\big)\nabla U\\
\qquad\,\,\,  +U\Big(a(t,x)+(\chi_2\lambda_2v_2(t,x;u) -\chi_1\lambda_1v_1(t,x;u)) -(b(t,x)+\chi_2\mu_2-\chi_1\mu_1)U\Big ),\quad x\in\RR^+\\
\frac{\p U}{\p n}(t,0)=0\\
U(t_0,\cdot;u)=u_0(\cdot).
\end{cases}
\end{equation}
By Lemma \ref{half-line-lm2}, we have
$$
(\chi_2 \lambda_2 v_2-\chi_1 \lambda_1 v_1)(x,t)\leq M C_{0}\quad \forall\, t\in [t_0,t_0+T].
$$
Hence for $t\in (t_0,t_0+T]$,
\begin{equation}\label{global-exist-eq004-1}
\begin{cases}
U_{t}\le \Delta U+\nabla\big(\chi_2 v_2(t,x;u)-\chi_1v_1(t,x;u)\big)\nabla U\\
\qquad\,\,\,  +U\Big(a_{\sup}+C_0M -(b_{\inf}+\chi_2\mu_2-\chi_1\mu_1)U\Big ),\quad x\in\RR^+\\
\frac{\p U}{\p n}(t,0)=0\\
U(t_0,\cdot;u)=u_0(\cdot).
\end{cases}
\end{equation}
Observe that $U\equiv C_0$ is a super-solution of \eqref{global-exist-eq004-1}. Hence
by comparison principle for parabolic equations,
we have
$$
U(t,x;u)\le C_0 \quad t\in [t_0,t_0+T], \,\,\, x\in\RR^+.
$$
Therefore, $U(\cdot,\cdot;u)\in \mathcal{E}$.

 By the similar  arguments  as those in \cite[Lemma 4.3]{Salako2016spreading}, the mapping $\mathcal{E}\ni u\mapsto U(\cdot,\cdot;u)\in\mathcal{E}$ is continuous and compact, and then by Schauder's fixed theorem, it has a fixed point $u^*$. Clearly $(u^*(\cdot,\cdot),v_1(\cdot,\cdot;u^*),v_2(\cdot,\cdot;u^*))$ is a classical solution of \eqref{half-line-eq1}. Thus, by Lemma \ref{half-line-lm1}, we have $$
 u(t,x;t_0,u_0)=u^*(t,x)\le C_0\quad \forall\, t\in [t_0,t_0+T], \,\, x\in\RR^+.
 $$
 Since $0<T<T_{\max}$ is arbitrary, by Lemma \ref{half-line-lm1} again, we have $T_{\max}=\infty$ and
$$
0\le u(t,x;t_0,u_0)\le  C_0\quad \forall \,\, t\in [t_0,\infty), \,\, x\in [0,\infty).
$$
{ Moreover, by Lemma \ref{half-line-lm3},
$$
\limsup_{t\to\infty} \|u(t,\cdot;t_0,u_0)\|_\infty\le \frac{a_{\sup}}{b_{\inf}+\chi_2\mu_2-\chi_1\mu_1-M}.
$$}
 Theorem \ref{half-line-thm1} then follows.
\end{proof}

\subsection{Persistence}

In this subsection, we prove Theorem \ref{half-line-thm2} on the persistence of solutions of \eqref{half-line-eq1} with strictly positive initial functions.

\begin{proof}[Proof of Theorem \ref{half-line-thm2}]
(1) Assume that {\bf (H1)} holds.
 Fix $T_0>0$. Let $\delta_0^*$ and $M^+$  be as in Lemma \ref{half-line-lm4}.
   For any $u_0\in C_{\rm unif}^b(\RR^+)$ with $\inf_{x\in\RR^+}u_0(x)>0$, by Theorem \ref{half-line-thm1},
   $$
   u(t,x;t_0,u_0)\le C(u_0)\quad \forall\,\, t\ge t_0,\,\, x\in\RR^+.
   $$
  By Lemma \ref{half-line-lm3}, there is $T_1>0$ such that
  $$
  u(t,x;t_0,u_0)\le M^+\quad \forall\, t\ge t_0+T_1,\,\, x\in\RR^+.
  $$
  Observe that
  $$
  \inf_{x\in\RR^+} u(t_0+T_1,x;t_0,u_0)>0.
  $$
  Then there is $0<\delta\le \delta_0^*$ such that
  $$
  \delta\le u(t_0+T_1,x;t_0,u_0)\le M^+\quad \forall\,\, x\in\RR^+.
  $$
  By Lemma \ref{half-line-lm4},
  $$
  \delta\le u(t_0+T_1+nT_0,x;t_0,u_0)\le M^+\quad \forall\,\, n\in\NN,\,\, x\in\RR^+.
  $$
  This implies that there is $m(u_0)>0$ such that
  $$
  m(u_0)\le u(t,x;t_0,u_0)\le M^+\quad \forall\,\, t\ge t_0,\, \, x\in\RR^+.
  $$

  (2) Assume that {\bf (H2)} holds.  Let
  $$
M_0= \frac{a_{\sup}}{b_{\inf}-\chi_1\mu_1+\chi_2\mu_2-M}
$$
and
$$
m_0= \frac{a_{\inf}\big(b_{\inf}-(1+\frac{a_{\sup}}{a_{\inf}})\chi_1\mu_1+\chi_2\mu_2-M\big)}{(b_{\inf}-\chi_1\mu_1+\chi_2\mu_2-M)
(b_{\sup}-\chi_1\mu_1 + \chi_2\mu_2)}.
$$
By {\bf (H2)}, $m_0>0$.
  For given $u_0\in C^{b}_{\rm unif}(\RR^+)$ with $\inf_{x\in\RR^+}u_0(x)> 0 $,  define
$$
\underline{u}:=\liminf_{t\to\infty}\inf_{x\in\RR^+}u(x,t+t_0;t_0,u_0)\quad \text{and}\quad \overline{u}:=\limsup_{t\to\infty}\sup_{x\in\RR^+}u(x,t+t_0;t_0,u_0).
$$
If suffices to prove that
$$
m_0\le \underbar{u}\le \bar u\le M_0.
$$

By (1), $\underbar{u}>0$.
Using the definition of limsup and liminf, we have that for every $0<\varepsilon<\underbar{u}$, there is $T_{\varepsilon}>0$ such that
$$
\underline{u}-\varepsilon\leq u(x,t;t_0,u_0)\leq \overline{u}+\varepsilon\quad \forall\ x\in\RR^+,\ \forall\ t\geq t_0+T_{\varepsilon}.
$$
Hence, it follows from comparison principle for  elliptic equations, that
\begin{equation}\label{asym-eq03}
\mu_i(\underline{u}-\varepsilon)\leq \lambda_iv_i(x,t;t_0,u_0)\leq \mu_i(\overline{u}+\varepsilon), \forall\ x\in\RR^+,\ \ \forall \ t\ge T_{\varepsilon},\ i=1,2.
\end{equation}
We then have
\begin{align*}
u_t&=u_{xx}+(\chi_2v_2 - \chi_1v_1)_x u_x +u\big(a(t,x)-\chi_1 \lambda_1 v_1+\chi_1\mu_2 v_2-(b(t,x)-\chi_1\mu_1+\chi_2\mu_2)u\big)\\
&\ge u_{xx}+(\chi_2v_2 - \chi_1v_1)_x u_x +u\big(a_{\inf}-\chi_1 \mu_1(\bar u+\varepsilon)+\chi_2\mu_2 (\underbar{u}-\varepsilon)-(b_{\sup}-\chi_1\mu_1+\chi_2\mu_2)u\big)
\end{align*}
for $t\ge t_0+T_\varepsilon$.
This together with comparison principle for parabolic equations implies that
\begin{align*}
\underbar{u}&\ge   \frac{a_{\inf}-\chi_1 \mu_1(\bar u+\varepsilon)+\chi_2\mu_2 (\underbar{u}-\varepsilon)}
{(b_{\sup}-\chi_1\mu_1+\chi_2\mu_2)}\\
&\ge \frac{a_{\inf}-\chi_1 \mu_1(\bar u+\varepsilon)}
{(b_{\sup}-\chi_1\mu_1+\chi_2\mu_2)}.
\end{align*}
Let $\varepsilon\to 0$, we have
$$
\underbar{u} \ge \frac{a_{\inf}-\chi_1 \mu_1\bar u}
{(b_{\sup}-\chi_1\mu_1 + \chi_2\mu_2)}.
$$
By Lemma \ref{half-line-lm3},
$$
\bar u\le M_0=\frac{a_{\sup}}{b_{\inf}-\chi_1\mu_1+\chi_2\mu_2-M}.
$$
It then follows that
$$
\underbar{u}\ge m_0= \frac{a_{\inf}\big(b_{\inf}-(1+\frac{a_{\sup}}{a_{\inf}})\chi_1\mu_1+\chi_2\mu_2-M\big)}{(b_{\inf}-\chi_1\mu_1+\chi_2\mu_2-M)
(b_{\sup}-\chi_1\mu_1 +\chi_2\mu_2)}.
$$
\end{proof}

\subsection{Positive entire solutions}

In this subsection, we prove Theorem \ref{half-line-thm3} on the existence, uniqueness, and stability of strictly positive entire solutions of
\eqref{half-line-eq1}

We first prove Theorem \ref{half-line-thm3}(1).

\begin{proof}[Proof of Theorem \ref{half-line-thm3}(1)]
It can be proved by applying properly modified arguments  in  \cite[Theorem 1.4 (iii)]{Salako2018parabolic2}. For the reader's
convenience, we provide some outline of the proof.

\smallskip

First, Let
$$
M^+= \frac{a_{\sup}}{b_{\inf}+\chi_2\mu_2-\chi_1\mu_1-M}+1.
$$
By Lemma \ref{half-line-lm4},
there is $0<\delta_0^*<M^+$ such that for any $0< \delta\le \delta_0^*$
 and for any $u_0\in C_{\rm unif}^b(\RR^+)$ with
$\delta\le u_0\le M^+$,
\begin{equation}
\label{new-gl-eq1}
\delta\le u(t_0+T_0,x;t_0,u_0)\le M^+\quad \forall\,\,  x\in [0,\infty),\,\, t_0\in\RR.
\end{equation}

\smallskip

Next, let
$$
u_n(t,x)=u(t-nT,x;-nT,\delta_0^*)\quad \forall\,\, t\ge -nT,\,\, x\in\RR^+
$$
Then there is $n_k\to \infty$ and $u^*(t,x)$ such that
$$
\lim_{n\to\infty} u_n(t,x)=u^*(t,x)
$$
locally uniformly on $\RR\times [0,\infty)$. It can then be verified that
$(u,v_1,v_2)=(u^*(t,x),v_1^*(t,x),v_2^*(t,x))$ is a strictly positive entire solution of \eqref{half-line-eq1},
where $v_i^*(t,x)$ satisfies
$$
\begin{cases}
0=v_{i,xx}-\lambda_i v+\mu_i u^*(t,x),\quad 0<x<\infty\cr
v_x(t,0)=0
\end{cases}
$$
for $i=1,2$.

\smallskip

Now, we show that, if $a(t+T,x)\equiv a(t,x)$ and $b(t+T,x)\equiv b(t,x)$, then \eqref{half-line-eq1} has a time $T-$ periodic positive solution.
To this end, choose $T_0=T$ and  let
 $$\mathcal{E}=\{u\in C^{b}_{\rm unif}(\RR^+)\ |\ \delta_{0}^*\leq u_{\inf}\leq u_{\sup}\leq M^+\}$$
  endowed with the open compact topology. For any $u_0\in \mathcal{E}$, define
  $$
  \mathcal{P}u_0=u(T,\cdot;0,u_0).
  $$
  By \eqref{new-gl-eq1}, $\mathcal{P}u_0\in \mathcal{E}$.
  By the similar arguments as those in \cite[Theorem 1.4 (iii)]{Salako2018parabolic2}, it can be proved that
  $\mathcal{P}:\mathcal{E}\to \mathcal{E}$ is a continuous and compact map. Then
   Schauder's fixed theorem implies that there is $u^*\in E$ such that $u(T,\cdot;0,u^*)=u^*$.  Clearly $(u(\cdot,\cdot;0,u^*),v_1(\cdot,\cdot;0,u^*), v_2(\cdot,\cdot;0,u^*))$ is a $T-$periodic solution of \eqref{one-free-boundary-eq}
    and hence is a  positive entire solution.  Theorem \ref{half-line-thm3}(1) is thus proved.
\end{proof}

Next, we  prove Theorem \ref{half-line-thm3} (2).

 \begin{proof}[Proof of Theorem \ref{half-line-thm3} (2)]
 (i)
 First,  note that, by Lemmas \ref{half-line-lm2} and \ref{half-line-lm3}, we only need to prove the statement for
 $u_0\in C_{\rm unif}^b(\RR^+)$ satisfying
 \begin{equation}
 \label{asym-eq0}
 0<\inf_{x\in\RR^+} u_0(x)\le \sup_{x\in\RR^+}u_0(x)<\frac{a_{\sup}+1}{b_{\inf}+\chi_2\mu_2-\chi_1\mu_1-M}.
 \end{equation}
 Note also that
$(u(t,x),v_1(t,x),v_2(t,x))=(u^*(t), v_1^*(t),v_2^*(t))$
 is a strictly positive periodic solution of \eqref{half-line-eq1}, where $u=u^*(t)$ is the unique positive $T$-periodic solution of the ODE
\begin{equation}
\label{ode-eq1}
u^{'}=(a(t)-b(t)u)u,
\end{equation}
and
$v_1^*(t)=\frac{\mu_1}{\lambda_1} u^*(t)$, $v_2^*(t)=\frac{\mu_2}{\lambda_2} u^*(t)$.
It then suffices to prove that, for any given $t_0\in\RR$ and $u_0\in C_{\rm unif}^b(\RR^+)$ satisfying \eqref{asym-eq0},
\begin{equation}
\label{asym-eq1}
\lim_{t\to\infty}\|u(t+t_0,\cdot;t_0,u_0)-u^*(t+t_0)\|_\infty=0.
\end{equation}

To prove \eqref{asym-eq1}, for given $t_0\in\RR$ and $u_0\in C_{\rm unif}^b(\RR^+)$ with $\inf_{x\in\RR^+}u_0(x)>0$,
define
\begin{equation*}
 U(t,x)=\frac{u(t,x;t_0,u_0)}{u^*(t)},\,\,\, V_1(t,x)=\frac{v_1(t,x;t_0,u_0)}{v_1^*(t)},\,\,\, V_2(t,x)=\frac{v_2(t,x;t_0,u_0)}{v_1^*(t)}.
\end{equation*}
It then suffice to prove that
\begin{equation}
\label{asym-eq2}
\lim_{t\to\infty} \|U(t,\cdot)-1\|_\infty=0.
\end{equation}
We claim that  for any $\varepsilon>0$, there are
$T_{\varepsilon,n}$ $(n=1,2,\cdots)$ such that  for any $t\geq T_{\varepsilon,n}$,
\begin{equation}\label{asym-eq2-1}
\|U(t+t_0,\cdot)-1\|_{\infty}\leq \Big(\frac{K}{b_{\inf}+\chi_2\mu_2-\chi_1\mu_1}\Big)^{n}\frac{a_{\sup}+1}{(b_{\inf}+\chi_2\mu_2-\chi_1\mu_1-M)u^*_{ \inf}} +\varepsilon ,
\end{equation}
where $K$ is defined in \eqref{k-eq}.
Note that {\bf (H3)} implies that  $\frac{K}{b_{\inf}+\chi_2\mu_2-\chi_1\mu_1}<1$.
 Hence
 \eqref{asym-eq2} follows from \eqref{asym-eq2-1}. In the following, we prove \eqref{asym-eq2-1} by induction.

First, by direct calculation, we have
\begin{align}\label{asym-eq3}
U_t=&\Delta U -\chi_1 U_x v_{1,x}+\chi_2 U_x v_{2,x}+ \chi_1\mu_1 U(V_1-1)u^*(t)-\chi_2\mu_2 U(V_2-1) u^*(t)\nonumber \\
&+ (b(t)-\chi_1\mu_1+\chi_2\mu_2)U(1-U)u^*(t).
\end{align}
Obverse that $V_1(t,x)$  satisfies
$$
\begin{cases}
0=(V_1-1)_{xx}-\lambda_1 (V_1-1)+\lambda_1 (U(t,x)-1),\quad x\in (0,\infty)\cr
(V_{1}-1)_{x}(t,0)=0
\end{cases}
$$
and $V_2(t,x)$ satisfies
$$
\begin{cases}
0=(V_{2}-1)_{xx}-\lambda_2 (V_2-1)+\lambda_2 (U(t,x)-1),\quad x\in (0,\infty)\cr
(V_{2}-1)_{x}(t,0)=0
\end{cases}
$$
for $t\ge t_0$.  Then by Lemma \ref{half-line-lm2}(2),
$$
\|\chi_2 \mu_2 (V_2(t,\cdot)-1)-\chi_1 \mu_1( V_1(t,\cdot)-1)\|_\infty \leq K \|U(t,\cdot)-1\|_\infty \quad \forall\, t\in [t_0,\infty),
$$
where $K$ is as in \eqref{k-eq}.  Observe also that
$$
\limsup_{t\to\infty} \|U(t,\cdot)-1\|_\infty\le \frac{a_{\sup}}{\big(b_{\inf}+\chi_2\mu_2-\chi_1\mu_1-M\big)u^*_{\inf} }.
$$
Let
$$
\tilde M_1= \frac{a_{\sup}+1}{\big(b_{\inf}+\chi_2\mu_2-\chi_1\mu_1-M\big)u^*_{\inf}} .
$$
Then there is $\tilde T_{1,\varepsilon}>0$ such that
\begin{align*}
U_t\le \Delta U -\chi_1 U_x v_{1,x}+\chi_2 U_x v_{2,x}+\Big[K \tilde M_1  U + (b(t)-\chi_1\mu_1+\chi_2\mu_2)U(1-U)\Big]u^*(t)
\end{align*}
and
\begin{align*}
U_t\ge \Delta U -\chi_1 U_x v_{1,x}+\chi_2 U_x v_{2,x}-\Big[K \tilde M_1  U + (b(t)-\chi_1\mu_1+\chi_2\mu_2)U(1-U)\Big]u^*(t)
\end{align*}
for $t\ge \tilde T_{1,\varepsilon}$.

Next, it is not difficult to see that
$$
\inf_{x\in\RR^+} U(t,x)>0\quad \forall \,\, t\ge t_0.
$$
Let $\bar U(t)$  be the solution of
$$
\begin{cases}
U_t=\Big[K  \tilde M_1 U + ({b_{\inf}}-\chi_1\mu_1+\chi_2\mu_2)U(1-U)\Big]u^*(t)\cr
{ U(\tilde T_{1,\epsilon})}= \max\{ \|U(\tilde T_{1,\epsilon},\cdot)\|_\infty, 1+\frac{K}{b_{\inf}+\chi_2\mu_2-\chi_1\mu_1}\tilde M_1\}
\end{cases}
$$
and $\underbar{U}(t)$ be the solution of
$$
\begin{cases}
U_t=\Big[- K \tilde M_1  U + (b_{\inf}-\chi_1\mu_1+\chi_2\mu_2)U(1-U)\Big]u^*(t)\cr
{ U(\tilde T_{1,\epsilon})}=\inf\{\inf_{x\in\RR^+}U(\tilde T_{1,\epsilon},x),1\}.
\end{cases}
$$
Then
$$
\lim_{t\to\infty} \bar U(t)= 1+\frac{K}{{b_{\inf}}+\chi_2\mu_2-\chi_1\mu_1}\tilde M_1
$$
and
$$
\lim_{t\to\infty} \underbar{U}(t)=1-\frac{K}{b_{\inf}+\chi_2\mu_2-\chi_1\mu_1} \tilde M_1 .
$$
Moreover, it is not difficult to see that
$$
\underbar{U}(t)\le U(t,x)\le \bar U(t)\quad \forall \, t\ge \tilde T_{1,\varepsilon}.
$$
It then follows that for any given $\varepsilon>0$,  there is $T_{1,\varepsilon}\ge \tilde T_{1,\varepsilon}$ such that \eqref{asym-eq2-1} holds with $n=1$.

Next, assume that \eqref{asym-eq2-1} holds for $n=k$. For fixed $\varepsilon>0$, let $\tilde \varepsilon= \frac{\varepsilon}{2} \frac{b_{\inf}+\chi_2\mu_2-\chi_1\mu_1-M}{K}$. Then, by the similar arguments as in the above, we have
\begin{align*}
&\|\chi_2\mu_2 (V_2(t,\cdot)-1)-\chi_1\mu_1 (V_1(t,\cdot)-1)\|_\infty\\
&
\le  K \Big(\frac{K}{b_{\inf}+\chi_2\mu_2-\chi_1\mu_1}\Big)^{k}\frac{(a_{\sup}+1)}{(b_{\inf}+\chi_2\mu_2-\chi_1\mu_1-M)u^*_{ \inf}} + K\tilde \varepsilon
\end{align*}
for $t\ge T_{\tilde \varepsilon,k}$. Let
$$
\tilde M_k=\Big(\frac{K}{b_{\inf}+\chi_2\mu_2-\chi_1\mu_1}\Big)^{k}\frac{(a_{\sup}+1)}{(b_{\inf}+\chi_2\mu_2-\chi_1\mu_1-M)u^*_{ \inf}} +\tilde \varepsilon.
$$
Then for $t\ge T_{\tilde\varepsilon,k}$,
\begin{align*}
U_t\le \Delta U -\chi_1 U_x v_{1,x}+\chi_2 U_x v_{2,x}+\Big[K \tilde M_k  U + (b(t)-\chi_1\mu_1+\chi_2\mu_2)U(1-U)\Big]u^*(t)
\end{align*}
and
\begin{align*}
U_t\ge \Delta U -\chi_1 U_x v_{1,x}+\chi_2 U_x v_{2,x}-\Big[K \tilde M_k  U + (b(t)-\chi_1\mu_1+\chi_2\mu_2)U(1-U)\Big]u^*(t).
\end{align*}
Moreover, we have
$$
\underbar{ U}_{k}(t)\le U(T,x)\le \bar U_k(t)\quad \forall\, t\ge T_{\tilde \varepsilon,k},\,\, x\in\RR^+,
$$
where
$\bar U_k(t)$ is the solution of
$$
\begin{cases}
U_t=\Big[K \tilde M_k U + ({b_{\inf}}-\chi_1\mu_1+\chi_2\mu_2)U(1-U)\Big]u^*(t)\cr
 U(T_{\tilde\varepsilon, k})= \max\{ \sup_{x\in\RR^+}U(T_{\tilde \varepsilon,k}, x), 1+\frac{K}{b_{\inf}+\chi_2\mu_2-\chi_1\mu_1}\tilde M_k\}
\end{cases}
$$
and $\underbar{U}_k(t)$ be the solution of
$$
\begin{cases}
U_t=\Big[- K \tilde M_k  U + (b_{\inf}-\chi_1\mu_1+\chi_2\mu_2)U(1-U)\Big]u^*(t)\cr
 U(T_{\tilde\varepsilon,k})=\inf\{\inf_{x\in\RR^+}U(T_{\tilde\varepsilon,k},x),1\}.
\end{cases}
$$
Note that
$$
\lim_{t\to\infty}\bar U_k(t)=1+\frac{K}{{b_{\inf}}+\chi_2\mu_2-\chi_1\mu_1-M}\tilde M_k
$$
and
$$
\lim_{t\to\infty}\underbar{U}_k(t)=1-\frac{K}{b_{\inf}+\chi_2\mu_2-\chi_1\mu_1-M}\tilde M_k
$$
It then follows that there is $T_{\varepsilon,k+1}$ such that
$$
\|U(t,\cdot)-1\|_\infty\le \Big(\frac{K}{b_{\inf}+\chi_2\mu_2-\chi_1\mu_1}\Big)^{k+1}\frac{(a_{\sup}+1)}{(b_{\inf}+\chi_2\mu_2-\chi_1\mu_1-M)u^*_{ \inf}} + \varepsilon.
$$

By induction, \eqref{asym-eq2-1} holds for any $n\ge 1$.  Theorem \ref{half-line-thm3}(2)(i) is thus proved.

\medskip

(ii)  It can be proved by combing the arguments in (i) and properly modified arguments in \cite[Theorem 1.5]{Salako2018parabolic2}.
We do not provide the proof in this paper.
\end{proof}

\begin{proof}[Proof of Theorem \ref{whole-line-thm}]
(1) It follows from the similar arguments as those in Theorem \ref{half-line-thm1}.

(2) It follows  from the similar arguments as those in Theorem \ref{half-line-thm2}.

(3) It follows  from the similar arguments as those in Theorem \ref{half-line-thm3}(1).

(4) It follows  from the similar arguments as those in Theorem \ref{half-line-thm3}(2).
\end{proof}

\section*{Acknowledgments}
 The authors would like to thank Prof. Yihong Du for helpful comments and suggestions on the derivation of the free boundary condition
  in \eqref{one-free-boundary-eq}.

\end{document}